\newtheorem{theorem}{Theorem}
\newtheorem{lemma}[theorem]{Lemma}
\newtheorem{remark}[theorem]{Remark}
\newtheorem{definition}[theorem]{Definition}
\newtheorem{corollary}[theorem]{Corollary}
\newcommand\HS[1]{\leavevmode\null\hspace{#1mm}}
\newcounter{ITEM}
\newcommand\mL{\mathfrak{L}}
\newcommand\wdots{, ...\HS{0.2}, }
\newcommand\xx{x}
\begin{document}


\noindent{\Large
Transposed Poisson structures on
Galilean and solvable Lie algebras}\footnote{
The first part of this work is supported by RSF 19-71-10016.
The second part of this work is supported
by the NNSF of China (12101248) and  by the China
Postdoctoral Science Foundation (2021M691099);
FCT   UIDB/MAT/00212/2020, UIDP/MAT/00212/2020 and 2022.02474.PTDC.
}
\footnote{Corresponding author: Zerui Zhang (zeruizhang@scnu.edu.cn)}

 \bigskip

 \bigskip

\begin{center}

 {\bf
Ivan Kaygorodov\footnote{CMA-UBI, Universidade da Beira Interior, Covilh\~{a}, Portugal; \    kaygorodov.ivan@gmail.com},
 Viktor Lopatkin\footnote{National Research University Higher School of Economics, Faculty of Computer Science,
Pokrovsky Boulevard 11, Moscow, 109028 Russia;  Saint Petersburg  University, Saint Petersburg, Russia; \ wickktor@gmail.com} \&
Zerui Zhang\footnote{School of Mathematical Sciences, South China Normal University, Guangzhou, P. R. China; \ zeruizhang@scnu.edu.cn}

}

\end{center}

 \bigskip

 \bigskip

\noindent {\bf Abstract:}
{\it Transposed Poisson   structures on complex
 Galilean type Lie algebras and superalgebras are described.
 It was proven that all principal Galilean Lie algebras do not have non-trivial $\frac{1}{2}$-derivations and as it follows they do not admit non-trivial transposed Poisson   structures.
 Also, we proved that each complex finite-dimensional solvable Lie algebra admits a non-trivial transposed Poisson structure and a non-trivial ${\rm Hom}$-Lie structure.
 }

 \bigskip

\ 

\noindent {\bf Keywords}:
{\it Lie algebra, transposed Poisson algebra, $\delta$-derivation.}

 \bigskip

\noindent {\bf MSC2020}: 17A30, 17B40, 17B63.

	 \bigskip

\ 

\ 

	\tableofcontents
	
	\newpage
\section*{Introduction}
Poisson algebras arose from the study of Poisson geometry in the 1970s and have appeared in an extremely wide range of areas in mathematics and physics, such as Poisson manifolds, algebraic geometry, operads, quantization theory, quantum groups, and classical and quantum mechanics. The study of all possible Poisson algebra structures with a certain Lie or associative part is an important problem in the theory of Poisson algebras \cite{jawo,said2,YYZ07,kk21}.
Recently, a dual notion of the Poisson algebra (transposed Poisson algebra) by exchanging the roles of the two binary operations in the Leibniz rule defining the Poisson algebra has been introduced in the paper of Bai, Bai, Guo, and Wu \cite{bai20}.
They have shown that the transposed Poisson algebra defined this way not only shares common properties of the Poisson algebra, including the closure undertaking tensor products and the Koszul self-duality as an operad but also admits a rich class of identities. More significantly, a transposed Poisson algebra naturally arises from a Novikov-Poisson algebra by taking the commutator Lie algebra of the Novikov algebra.
Later, in a recent paper by Ferreira, Kaygorodov, and  Lopatkin
a relation between $\frac{1}{2}$-derivations of Lie algebras and
transposed Poisson algebras have been established \cite{FKL}.
These ideas were used for describing all transposed Poisson structures
on the Witt algebra \cite{FKL}, the Virasoro algebra \cite{FKL},
the algebra $\mathcal{W}(a,b)$\cite{FKL},
twisted Heisenberg-Virasoro \cite{yh21},
Schrodinger-Virasoro algebras \cite{yh21}, extended Schrodinger-Virasoro \cite{yh21}
and  Block Lie algebras and superalgebras~\cite{kk22}.

Galilei groups and their Lie algebras are important objects in theoretical physics and attract a lot of attention in related mathematical areas, see for example \cite{bg09,mpv16,bg091,s10,csm19,gm13,aks13  ,ai11,MT10,lmz14,tang19,GLP16,DT19,AS16,xs20}.
The present paper is dedicated to the study of transposed Poisson structures on
various Galilean type Lie algebras and superalgebras.
The last section of the paper is dedicated to discuss $\frac{1}{2}$-derivations of Lie algebras.
Namely, we prove that each complex finite-dimensional solvable Lie algebra admits a non-trivial $\frac{1}{2}$-derivation and as follows it admits a non-trivial transposed Poisson structure.

\section{Preliminaries}

The study of $\delta$-derivations of Lie algebras was initiated by Filippov in 1998 \cite{fil1}. The space of $\delta$-derivations includes usual derivations, antiderivations and elements from the centroid.
During last 20 years, $\delta$-derivations of prime Lie algebras,
$\delta$-derivations of simple Lie and Jordan superalgebras
have been investigating (see,  \cite{zusma,k12} and references therein).

\begin{definition}\label{deltadif}
 Let $\mathfrak{L}$ be a   superalgebra and $\delta$ an element of the ground field. A homogeneous endomorphism $\varphi$ of a superspace of endomorphisms is called a   $\delta$-superderivation  if
 \begin{center}
  $\varphi[a,b] = \delta \left([\varphi(a),b] + (-1)^{\mathrm{deg}(a)\mathrm{deg}(\varphi)}[a,\varphi(b)]\right).$
 \end{center}
\end{definition}

The main example of $\frac{1}{2}$-derivations is the multiplication by an element from the ground field.
Let us call such $\frac{1}{2}$-derivations as trivial
$\frac{1}{2}$-derivations.
For an algebra $\mathfrak L$ we will denote the space of all $\frac{1}{2}$-derivations of $\mathfrak L$ as $\Delta(\mathfrak L).$

\begin{lemma}\label{[D1,D2]}
 Let $\varphi_1$, $\varphi_2$ be  $\delta_1$- and  $\delta_2$-superderivations of a superalgebra. Then the supercommutator
 \begin{center}
    $\llbracket \varphi_1,\varphi_2\rrbracket_s= \varphi_1\varphi_2-(-1)^{deg(\varphi_1)deg(\varphi_2)}\varphi_2\varphi_1$
 \end{center}is a   $\delta_1\delta_2$-superderivation.
Similarly,
the commutator $\llbracket \varphi_1,\varphi_2\rrbracket$ of $\delta_1$- and  $\delta_2$-derivations of an algebra is a   $\delta_1\delta_2$-derivation.
\end{lemma}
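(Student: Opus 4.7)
The plan is to verify the supercommutator identity by direct expansion, using the $\delta_i$-superderivation property twice. I would start by fixing homogeneous elements $a,b$ of the superalgebra and computing $\varphi_1\bigl(\varphi_2[a,b]\bigr)$ and $\varphi_2\bigl(\varphi_1[a,b]\bigr)$ separately. Applying Definition \ref{deltadif} to the inner bracket first produces two terms, each a bracket in which one of $a$ or $b$ has been replaced by its image under $\varphi_2$ (or $\varphi_1$); applying the outer $\varphi$ then gives four terms in total, with sign factors depending on $\deg(\varphi_1)$, $\deg(\varphi_2)$, and $\deg(a)$, together with an overall scalar $\delta_1\delta_2$. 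The key observation to keep straight during expansion is that $\varphi_2(a)$ is homogeneous of degree $\deg(a)+\deg(\varphi_2)$, so the Koszul sign that appears when $\varphi_1$ passes the first slot of $[\varphi_2(a),b]$ is $(-1)^{(\deg(a)+\deg(\varphi_2))\deg(\varphi_1)}$, not $(-1)^{\deg(a)\deg(\varphi_1)}$.

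Once both expansions are laid out, I would group the eight resulting terms into two families: the \emph{diagonal} terms, containing $\varphi_1\varphi_2(a)$, $\varphi_1\varphi_2(b)$, $\varphi_2\varphi_1(a)$, $\varphi_2\varphi_1(b)$, and the \emph{cross} terms, containing $[\varphi_1(a),\varphi_2(b)]$ and $[\varphi_2(a),\varphi_1(b)]$. The aim is to show that after forming
\[
\llbracket\varphi_1,\varphi_2\rrbracket_s[a,b] \;=\; \varphi_1\varphi_2[a,b]\;-\;(-1)^{\deg(\varphi_1)\deg(\varphi_2)}\varphi_2\varphi_1[a,b],
\]
the cross terms cancel and the diagonal terms assemble precisely into
\[
\delta_1\delta_2\Bigl([\llbracket\varphi_1,\varphi_2\rrbracket_s(a),b]+(-1)^{\deg(a)\deg(\llbracket\varphi_1,\varphi_2\rrbracket_s)}[a,\llbracket\varphi_1,\varphi_2\rrbracket_s(b)]\Bigr),
\]
using the fact that $\deg(\llbracket\varphi_1,\varphi_2\rrbracket_s)=\deg(\varphi_1)+\deg(\varphi_2)$.

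The main obstacle, and really the only nontrivial point, is the sign bookkeeping for the cross terms. Explicitly, in $\varphi_1\varphi_2[a,b]$ the two cross terms come with signs $(-1)^{\deg(a)\deg(\varphi_2)}$ (for $[\varphi_1(a),\varphi_2(b)]$) and $(-1)^{\deg(a)\deg(\varphi_1)+\deg(\varphi_1)\deg(\varphi_2)}$ (for $[\varphi_2(a),\varphi_1(b)]$). After swapping $1\leftrightarrow 2$ in the symmetric computation and multiplying by the overall $(-1)^{\deg(\varphi_1)\deg(\varphi_2)}$, one checks that each cross term acquires exactly the same sign as its counterpart from $\varphi_1\varphi_2[a,b]$, so the subtraction annihilates them. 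The diagonal terms then give the claimed formula immediately.

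Finally, the non-super case is either a specialization of the super case (take the superalgebra purely even, so all degrees are $0$ and all signs become $+1$) or, equivalently, follows from the same expansion with every Koszul sign replaced by $1$; no separate argument is needed.
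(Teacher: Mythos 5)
Your argument is correct: the paper states this lemma without proof (it is a standard fact going back to Filippov's work on $\delta$-derivations), so there is no proof of record to compare against, but your direct expansion is exactly the computation one would write down. Your sign bookkeeping checks out — the cross term $[\varphi_2(a),\varphi_1(b)]$ appears in $\varphi_1\varphi_2[a,b]$ with sign $(-1)^{\deg(a)\deg(\varphi_1)+\deg(\varphi_1)\deg(\varphi_2)}$ and in $(-1)^{\deg(\varphi_1)\deg(\varphi_2)}\varphi_2\varphi_1[a,b]$ with the same sign, and symmetrically for $[\varphi_1(a),\varphi_2(b)]$, so both cancel in the supercommutator, while the diagonal terms assemble into $\delta_1\delta_2\bigl([\llbracket\varphi_1,\varphi_2\rrbracket_s(a),b]+(-1)^{\deg(a)\deg(\llbracket\varphi_1,\varphi_2\rrbracket_s)}[a,\llbracket\varphi_1,\varphi_2\rrbracket_s(b)]\bigr)$ as claimed; the ungraded case follows by setting all degrees to zero.
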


The definition of the transposed Poisson algebra was given in a paper by Bai, Bai, Guo, and Wu \cite{bai20}.

\begin{definition}\label{tpa}
Let ${\mathfrak L}$ be a vector space equipped with two nonzero bilinear operations $\cdot$ and  $[\cdot, \cdot]$.
The triple  $({\mathfrak L},\cdot,[\cdot,\cdot])$  is called a transposed Poisson algebra if $({\mathfrak L},\cdot)$ is a commutative associative algebra and
 $({\mathfrak L},[\cdot,\cdot])$   is a Lie algebra that satisfies the following compatibility condition
\begin{equation}\label{link1}
2z\cdot [x,y]=[z\cdot x,y]+[x,z\cdot y].\end{equation}
\end{definition}

Summarizing Definitions \ref{deltadif} and  \ref{tpa}  we have the following key lemma.
    \begin{lemma}\label{glavlem}

Let $({\mathfrak L},\cdot,[\cdot,\cdot])$   be a transposed Poisson algebra
and $z$ an arbitrary element from ${\mathfrak L}.$
Then the right multiplication $R_z$ in the associative commutative algebra $({\mathfrak L},\cdot)$ gives a $\frac{1}{2}$-derivation of the Lie algebra  $({\mathfrak L}, [\cdot,\cdot])$.
\end{lemma}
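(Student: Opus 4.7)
The plan is essentially a bookkeeping exercise: the transposed Poisson compatibility relation (\ref{link1}) already has the shape of a $\frac{1}{2}$-derivation identity, and my task is only to recognise it as such. First, I would fix $z \in \mathfrak{L}$ and introduce the right multiplication operator $R_z$ in the commutative associative algebra $(\mathfrak{L}, \cdot)$, so that $R_z(x) = x \cdot z = z \cdot x$ for every $x \in \mathfrak{L}$, where the second equality uses commutativity of the product.

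With this notation in hand, the compatibility axiom $2 z \cdot [x,y] = [z \cdot x, y] + [x, z \cdot y]$ rewrites as
\[
2 R_z([x,y]) = [R_z(x), y] + [x, R_z(y)],
\]
and dividing both sides by $2$ I obtain exactly the defining identity of Definition \ref{deltadif} with $\delta = \frac{1}{2}$, specialised to the ungraded (purely even) setting where the sign $(-1)^{\deg(a)\deg(\varphi)}$ collapses to $+1$.

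I do not foresee any serious obstacle: the entire argument is a direct translation of (\ref{link1}) into the language of $\delta$-derivations. The only nontrivial ingredient is the commutativity of $(\mathfrak{L}, \cdot)$, which is what allows me to identify the left multiplication $z \cdot x$ appearing on the right-hand side of (\ref{link1}) with the right multiplication operator $R_z$ used in the statement of the lemma.
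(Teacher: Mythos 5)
Your proof is correct and is exactly the argument the paper intends: the lemma is stated as an immediate consequence of Definitions \ref{deltadif} and \ref{tpa}, and your rewriting of the compatibility condition (\ref{link1}) as $R_z([x,y])=\tfrac{1}{2}\left([R_z(x),y]+[x,R_z(y)]\right)$, using commutativity of $(\mathfrak{L},\cdot)$ to identify $z\cdot x$ with $R_z(x)$, is precisely that verification.
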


Thanks to \cite{FKL}, we have the following useful results.

\begin{theorem}\label{princth}
Let ${\mathfrak L}$ be a Lie algebra (or superalgebra) of dimension~$>1$  without non-trivial $\frac{1}{2}$-derivations.
Then every transposed Poisson structure defined on ${\mathfrak L}$ is trivial.
\end{theorem}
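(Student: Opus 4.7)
The plan is to combine Lemma \ref{glavlem} with the hypothesis and exploit the commutativity of the associative product. Fix an arbitrary transposed Poisson structure $(\mathfrak{L}, \cdot, [\cdot,\cdot])$ on the given Lie algebra. For each $z \in \mathfrak{L}$, Lemma \ref{glavlem} says that the right multiplication operator $R_z\colon x \mapsto x \cdot z$ is a $\tfrac12$-derivation of $(\mathfrak{L},[\cdot,\cdot])$. By the assumption that $\mathfrak{L}$ admits no non-trivial $\tfrac12$-derivations, each such $R_z$ must be of the form $R_z = \lambda(z)\,\mathrm{id}_{\mathfrak{L}}$ for some scalar $\lambda(z)$ depending on $z$.

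Next I would use the commutativity of $\cdot$. Since $x \cdot z = z \cdot x$, we obtain
\[
\lambda(z)\, x \;=\; R_z(x) \;=\; R_x(z) \;=\; \lambda(x)\, z
\qquad \text{for all } x,z \in \mathfrak{L}.
\]
Because $\dim \mathfrak{L} > 1$, for any nonzero $z$ we can choose $x$ not lying in the span of $z$. The equality $\lambda(z) x = \lambda(x) z$ with $x, z$ linearly independent forces $\lambda(x) = \lambda(z) = 0$. Varying $z$ this yields $\lambda \equiv 0$, and therefore $x \cdot z = 0$ for all $x, z \in \mathfrak{L}$; that is, the transposed Poisson structure is trivial.

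For the superalgebra version the argument is the same after taking parities into account: Lemma \ref{glavlem} (applied in the graded setting) produces a homogeneous $\tfrac12$-superderivation $R_z$ for each homogeneous $z$; the triviality hypothesis still forces $R_z$ to be a scalar multiple of the identity (and in particular $R_z = 0$ when $z$ is odd, since a nonzero scalar map is even), and graded commutativity of $\cdot$ again gives $\lambda(z)x = (-1)^{|x||z|}\lambda(x)z$, from which $\lambda \equiv 0$ by choosing homogeneous linearly independent elements.

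There is no serious obstacle here; the proof is essentially a one-step reduction via Lemma \ref{glavlem} followed by a two-line linear-independence argument. The only thing one has to be slightly careful about is to extract the scalar $\lambda(z)$ cleanly and to ensure that the dimension hypothesis $\dim \mathfrak{L} > 1$ is actually used (it is needed precisely to produce two linearly independent vectors in the identity $\lambda(z)x = \lambda(x)z$).
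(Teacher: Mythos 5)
Your proposal is correct and is essentially the argument of the cited source \cite{FKL}: the paper itself states Theorem \ref{princth} without proof, and the intended justification is exactly your reduction via Lemma \ref{glavlem} to $R_z=\lambda(z)\,\mathrm{id}$, followed by the linear-independence argument $\lambda(z)x=\lambda(x)z$ that uses $\dim\mathfrak{L}>1$ to force $\lambda\equiv 0$. Your remark on the super case (an odd $R_z$ that is a scalar multiple of the identity must vanish) is also the right way to handle parities.
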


\begin{definition}
The Witt algebra   is spanned by generators $\{ L_n \}_{ n \in \mathbb{Z}}$. These generators satisfy
 $$
 [L_m,L_n] = (m-n)L_{m+n}.
 $$
\end{definition}

\begin{theorem}\label{teo_witt}
Let $\varphi$ be a $\frac{1}{2}$-derivation of the Witt algebra ${\mathfrak L}.$
Then there is a set $\{ \alpha_i \}_{i \in {\mathbb Z}}$ of elements from the basic field, such that $\varphi (e_i)=\sum\limits_{j \in \mathbb{Z}} \alpha_{j} e_{i+j}.$
Every finite set $\{ \alpha_i \}_{i \in {\mathbb Z}}$ of elements from the basic field gives a $\frac{1}{2}$-derivation of ${\mathfrak L}.$
\end{theorem}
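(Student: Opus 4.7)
The plan is to expand $\varphi$ in the basis $\{L_n\}_{n\in\mathbb{Z}}$, translate the $\frac{1}{2}$-derivation condition into a scalar recurrence on the coefficients, and then solve this recurrence by a small set of specializations. Writing $\varphi(L_n) = \sum_{k \in \mathbb{Z}} c_{n,k} L_k$ (a finite sum for each $n$, since $\varphi$ is an endomorphism of $\mathfrak{L}$), the identity $\varphi[L_m, L_n] = \tfrac{1}{2}\bigl([\varphi(L_m),L_n] + [L_m, \varphi(L_n)]\bigr)$, expanded and compared coefficient-wise in $L_k$, becomes
\[
2(m-n)\, c_{m+n,k} \;=\; (k - 2n)\, c_{m, k-n} \;+\; (2m - k)\, c_{n, k-m}, \qquad (\ast)
\]
for all $m, n, k \in \mathbb{Z}$.

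First I would specialize $m = 0$ in $(\ast)$ and cancel the common $k\,c_{n,k}$ term to obtain $(k - 2n)\, c_{n,k} = (k - 2n)\, c_{0, k-n}$. Setting $\alpha_j := c_{0,j}$, this already gives $c_{n,k} = \alpha_{k-n}$ whenever $k \neq 2n$. To determine the remaining diagonal entries $c_{n,2n}$, I would substitute $k = 2n + m$ in $(\ast)$, which yields
\[
2(m - n)\, c_{m+n,\, 2n+m} \;=\; m\, c_{m, n+m} \;+\; (m - 2n)\, c_{n, 2n}.
\]
For $m \notin \{0, n, 2n\}$ the two coefficients on the left and in the first term of the right-hand side both lie in the off-diagonal region already handled and evaluate to $\alpha_n$, so the relation collapses to $(m - 2n)\, c_{n, 2n} = (m - 2n)\, \alpha_n$, forcing $c_{n,2n} = \alpha_n$; such an integer $m$ always exists, since $\{0, n, 2n\}$ is finite. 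Combining both cases gives $\varphi(L_n) = \sum_j \alpha_j L_{n+j}$, and finiteness of the support of $(\alpha_j)$ follows from the fact that $\varphi(L_0)$ is a finite sum.

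For the converse, substituting $\varphi(L_n) = \sum_j \alpha_j L_{n+j}$ into the defining identity reduces, term by term in $j$, to the trivial algebraic equality $(m+j-n) + (m-n-j) = 2(m-n)$, which is the only content of the $\frac{1}{2}$-derivation condition on a homogeneous component of degree $j$.

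The main obstacle is the singular locus $k = 2n$, where the factor $(k - 2n)$ appearing in the $m = 0$ specialization of $(\ast)$ vanishes and that specialization provides no information about $c_{n,2n}$. The resolution is the shifted choice $k = 2n + m$ above, which re-introduces $c_{n,2n}$ into the relation alongside coefficients already determined off the diagonal; the flexibility to choose $m$ outside the finite forbidden set $\{0, n, 2n\}$ is precisely what permits dividing through to pin down the diagonal entries.
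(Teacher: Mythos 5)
Your proof is correct: the recurrence $(\ast)$ is the right coefficient-wise translation of the $\frac{1}{2}$-derivation identity, the specialization $m=0$ pins down all off-diagonal coefficients, and the shifted choice $k=2n+m$ with $m\notin\{0,n,2n\}$ legitimately recovers the diagonal entries. The paper itself gives no proof of this theorem (it is quoted from the reference [FKL]), and your argument is essentially the same basis-expansion computation carried out there, so nothing further is needed.
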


\begin{definition}
The Virasoro algebra  is spanned by generators $\{ L_n \}_{ n \in \mathbb{Z}}$ and the central element $c$. These generators satisfy
 $$
 [L_m,L_n] = (m-n)L_{m+n} + (m^3 - m)\delta_{m+n,0}c.
 $$
\end{definition}

\begin{theorem}\label{virasoro}
There are no non-trivial $\frac{1}{2}$-derivations of the Virasoro algebra.
\end{theorem}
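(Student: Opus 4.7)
The plan is to exploit the fact that the Virasoro algebra is a one-dimensional central extension of the Witt algebra, and to leverage Theorem~\ref{teo_witt} describing $\frac{1}{2}$-derivations of the Witt algebra.

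First, I would show that $\varphi$ preserves the center. Applying the defining identity to the pair $(L_m,c)$ and using $[L_m,c]=0$ yields $[L_m,\varphi(c)]=0$ for every $m\in\mathbb{Z}$. A direct computation shows that the centralizer in the Virasoro algebra of the whole family $\{L_m\}_{m\in\mathbb{Z}}$ is one-dimensional and spanned by $c$, so $\varphi(c)=\mu c$ for some scalar $\mu$. Next, the projection of $\varphi$ onto the quotient $\mathrm{Vir}/\mathbb{C}c\cong\mathrm{Witt}$ is still a $\frac{1}{2}$-derivation, so by Theorem~\ref{teo_witt} we may write
$$\varphi(L_n)=\sum_{j}\alpha_{j}L_{n+j}+\beta_n c,$$
for a finite set $\{\alpha_j\}$ of scalars independent of $n$ and arbitrary scalars $\beta_n$.

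The core of the proof is to substitute this ansatz into the identity $2\varphi[L_m,L_n]=[\varphi(L_m),L_n]+[L_m,\varphi(L_n)]$. The coefficients of $L_{m+n+j}$ balance automatically (this is precisely the Witt identity). Collecting the central contributions from both brackets and setting $k:=m+n$ yields the scalar relation
$$2(2m-k)\beta_{k}+2(m^{3}-m)\delta_{k,0}\,\mu=\alpha_{-k}(2m-k)\bigl(m^{2}-mk+k^{2}-1\bigr),$$
since the only $j$ surviving the Kronecker deltas is $j=-k$, contributing $((m-k)^3-(m-k))+(m^3-m)=(2m-k)(m^2-mk+k^2-1)$.

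Finally, I would analyze this relation as a polynomial identity in $m$. For each fixed $k\neq 0$, dividing by $(2m-k)$ (valid for $m\neq k/2$) leaves a constant on the left and a quadratic polynomial in $m$ with leading coefficient $\alpha_{-k}$ on the right, forcing $\alpha_{-k}=0$ and then $\beta_k=0$. The case $k=0$ reduces to $4m\beta_0=2(\alpha_0-\mu)(m^3-m)$, whose polynomial comparison gives $\mu=\alpha_0$ and $\beta_0=0$. Hence $\varphi(L_n)=\alpha_0 L_n$ and $\varphi(c)=\alpha_0 c$, i.e., $\varphi$ is trivial. The main obstacle is bookkeeping in the cocycle computation: correctly isolating the cubic $m$-dependence that is absent in the Witt case and that is precisely what destroys all non-scalar $\frac{1}{2}$-derivations in the presence of the central extension.
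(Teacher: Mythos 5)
The paper does not prove this statement itself --- Theorem~\ref{virasoro} is quoted from \cite{FKL} without proof --- so there is no internal argument to compare against. Your proof is correct and complete: the reduction to the central quotient via Theorem~\ref{teo_witt}, the cocycle relation $2(2m-k)\beta_{k}+2(m^{3}-m)\delta_{k,0}\mu=\alpha_{-k}(2m-k)(m^{2}-mk+k^{2}-1)$, and the polynomial-in-$m$ comparison all check out, and this is essentially the same route taken in the cited source \cite{FKL}.
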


All algebras and superalgebras are considered over the complex field.

\section{TP-structures
on Galilean algebras}

\begin{definition}For every integer~$d\geq 3$, the
  Lie algebra $\mathfrak{gal}(d)$ of the Galilean  group
 (it seems that it first appeared in \cite{Bargmann})
 is   generated by the following relations:
 \begin{longtable}{lcl}
  $[J_{i,j}, J_{p,q}]$&$ = $&$ \delta_{i,p} J_{j,q} - \delta_{i,q}J_{j,p} - \delta_{j,p} J_{i,q} + \delta_{j,q}J_{i,p} $ \\
  $[J_{i,j}, P_k ] $&$= $&$\delta_{i,k}P_j - \delta_{j,k}P_i$ \\
    $[J_{i,j}, C_k] $&$= $&$ \delta_{i,k}C_j - \delta_{j,k}C_i$ \\
    $ [C_i, H]$&$ = $&$P_i,$
 \end{longtable}
 where ,
$1 \le i, j,k,p,q \le d$ and $i\ne j, p\ne q$
and $J_{i,j}$ are   antisymmetric tensors
 (namely, we have $J_{i,i}=0$ and~$J_{i,j}=-J_{j,i}$).
 \end{definition}

\begin{theorem}
There are no non-trivial transposed Poisson structures defined on  $\mathfrak{gal}(d).$
\end{theorem}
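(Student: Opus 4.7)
\emph{Proof plan.} By Theorem~\ref{princth}, it suffices to show that every $\tfrac12$-derivation $\varphi$ of $\mathfrak{gal}(d)$ is a scalar multiple of the identity; so fix $\varphi\in\Delta(\mathfrak{gal}(d))$ and aim to prove $\varphi=\alpha\cdot\mathrm{Id}$ for some $\alpha\in\mathbb{C}$. The strategy is to exploit the $\mathfrak{so}(d)$-module decomposition
\[
\mathfrak{gal}(d)=\mathfrak{so}(d)\oplus V_P\oplus V_C\oplus\mathbb{C}H,
\]
where $\mathfrak{so}(d)=\mathrm{span}\{J_{i,j}\}$ is the rotation subalgebra (simple for $d\geq 3$, $d\ne 4$, and semisimple in any case), and $V_P=\mathrm{span}\{P_k\}$, $V_C=\mathrm{span}\{C_k\}$ are copies of the standard irreducible $\mathfrak{so}(d)$-module.

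First, I would expand $\varphi$ on each generator as a linear combination of $H$, the $P_k$, the $C_k$ and the $J_{p,q}$, introducing unknown scalars, and then apply the defining identity $2\varphi[x,y]=[\varphi(x),y]+[x,\varphi(y)]$ to the \emph{vanishing} brackets $[J_{i,j},H]=[P_i,P_j]=[C_i,C_j]=[P_i,C_j]=[P_i,H]=0$. Reading off coefficients of the basis in each resulting identity kills the cross-terms and shows that $\varphi$ preserves the above isotypic decomposition: for instance, expanding $0=[\varphi(J_{i,j}),H]+[J_{i,j},\varphi(H)]$ and letting $(i,j)$ vary forces the $V_P\oplus V_C\oplus\mathfrak{so}(d)$-component of $\varphi(H)$ to vanish and the $V_C$-component of $\varphi(J_{i,j})$ to vanish, and so on.

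Next, applying the $\tfrac12$-derivation identity to the rotation relations $[J_{i,j},P_k]=\delta_{i,k}P_j-\delta_{j,k}P_i$ and $[J_{i,j},C_k]=\delta_{i,k}C_j-\delta_{j,k}C_i$, together with the block-diagonal structure of $\varphi$ just established, forces $\varphi$ to commute with the adjoint $\mathfrak{so}(d)$-action on $V_P$ and on $V_C$. Schur's lemma then yields $\varphi|_{V_P}=\alpha_P\cdot\mathrm{Id}$, $\varphi|_{V_C}=\alpha_C\cdot\mathrm{Id}$, and $\varphi(H)=\alpha_H H$. On the $\mathfrak{so}(d)$-summand, Filippov's classical result~\cite{fil1} on $\delta$-derivations of simple Lie algebras (or, when $d=4$, a direct application to each simple factor) gives $\varphi|_{\mathfrak{so}(d)}=\alpha_J\cdot\mathrm{Id}$.

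Finally, I would unify the four scalars. Applied to $[J_{i,j},P_k]$ the identity yields $2\alpha_P=\alpha_J+\alpha_P$, hence $\alpha_J=\alpha_P$; analogously, $[J_{i,j},C_k]$ gives $\alpha_J=\alpha_C$; and the unique mixed relation $[C_i,H]=P_i$ gives $2\alpha_P=\alpha_C+\alpha_H$, so all four scalars coincide. Therefore $\varphi=\alpha\cdot\mathrm{Id}$ is trivial, and Theorem~\ref{princth} finishes the proof. The main obstacle is the first step: the $\tfrac12$-derivation identity is weaker than a derivation identity, so respecting the $\mathfrak{so}(d)$-isotypic decomposition does not come for free; one has to extract it through a systematic, simultaneous use of all the vanishing brackets, keeping track of many coefficients before the representation-theoretic arguments of the second and third steps can take over.
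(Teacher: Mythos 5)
Your proposal is correct in outline and performs the same reduction (via Theorem~\ref{princth}) to the triviality of $\Delta(\mathfrak{gal}(d))$, but it organizes the computation differently from the paper. The paper first notes that $\mathfrak{gal}(d)$ is $\mathbb{Z}_2$-graded, with even part $\mathfrak{so}(d)\oplus\langle H\rangle$ and odd part $\langle P_k,C_k\rangle$, so $\Delta(\mathfrak{gal}(d))$ splits into an even and an odd component; the even component is treated essentially as in your second and third steps, while the odd component is dispatched by the commutator trick of Lemma~\ref{[D1,D2]}: $\llbracket\varphi_1,\mathbf{ad}_{P_i}\rrbracket$ is an even $\tfrac12$-derivation, hence a scalar, which forces $\varphi_1(P_j)=\varphi_1(C_j)=0$ at once. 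Your first step replaces this by a direct coefficient analysis of the vanishing brackets; that can be made to work, but note that the vanishing brackets alone do not give block-diagonality --- for instance $[J_{i,j},H]=0$ only ties the $V_C$-component of $\varphi(J_{i,j})$ to the $V_P$-component of $\varphi(H)$, and you need the $[J,J]$ and $[J,C]$ relations to finish killing those cross-terms --- so the grading-plus-commutator route is genuinely lighter. Two further caveats. First, Schur's lemma does not apply verbatim: the identity $2\varphi([J,P])=[\varphi(J),P]+[J,\varphi(P)]$ makes $\varphi|_{V_P}$ satisfy an affine relation rather than an intertwining one; the clean fix is the paper's explicit computation $2\varphi(P_j)=\alpha P_j+[J_{i,j},\varphi(P_i)]$, whose coefficients force $\varphi(P_j)=\alpha P_j$ directly. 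Second, for $d=4$ the algebra $\mathfrak{so}(4)\cong\mathfrak{sl}_2\oplus\mathfrak{sl}_2$ is only semisimple and its $\tfrac12$-derivations form a two-dimensional space spanned by the two projections (cf.\ Lemma~\ref{decom}), so applying Filippov's theorem factor by factor yields $\alpha_1\pi_1+\alpha_2\pi_2$ rather than a scalar; you must use the action on $V_P$ to force $\alpha_1=\alpha_2$ (the paper's phrase ``hence it is trivial on $\mathfrak{so}_n$'' elides the same point). With these repairs your argument goes through and yields the theorem.
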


\begin{proof}
We will use the standard way for proving that each transposed Poisson algebra structure is trivial.
After proving that each $\frac{1}{2}$-derivation of $\mathfrak{gal}(d)$ is trivial,
we are applying Theorem \ref{princth} and having that there are no non-trivial transposed Poisson structures on   $\mathfrak{gal}(d).$

\smallskip

It is clear that $\mathfrak{gal}(d)$ is a $\mathbb{Z}_2$-graded algebra: $\mathfrak{gal}(d) = (\mathfrak{gal}(d))_0 \oplus (\mathfrak{gal}(d))_1$, where $(\mathfrak{gal}(d))_0$  is   the direct sum of the simple algebra $\mathfrak{so}_n$   generated by all $J_{i,j}$ and
the one-dimensional algebra generated by $H;$
$(\mathfrak{gal}(d))_1$ is generated by  all $P_k,C_k$.
Hence $\Delta(\mathfrak{gal}(d))$ is also $\mathbb{Z}_2$-graded.   In particular, every $\frac{1}{2}$-derivation of $\mathfrak{gal}(d)$  can be written as the sum of an even   $\frac{1}{2}$-derivation and an odd one.

Let $\varphi_0$ be an even  $\frac{1}{2}$-derivation.
Then  for pairwise distinct numbers~$i,j,k$, since~$\varphi_0[J_{i,j},J_{i,k}]=\varphi_0(J_{j,k}) $,  it is easy to see that $\varphi_0(\mathfrak{so}_n) \subseteq \mathfrak{so}_n$.
  Hence it is trivial on $\mathfrak{so}_n$ and there is a complex number~$\alpha$,
such that $\varphi_0(J_{i,j})=\alpha J_{i,j}.$
On the other hand,
\[ 0=2\varphi_0[H,J_{i,j}]= [\varphi_0(H),J_{i,j}]+[H,\varphi_0(J_{i,j})], \]
which gives that $\varphi_0(H) \subseteq \langle H\rangle$ and there is a complex number $\beta,$ such that $\varphi_0(H)=\beta H.$
Obviously,
\[2\varphi_0( \mathbb{U}_j)= 2\varphi_0[ J_{i,j},  \mathbb{U}_i ]=  \alpha \mathbb{U}_j+[J_{i,j}, \varphi_0(\mathbb{U}_i)], \mbox{ where } \mathbb{U} \in \{P,C\},\]
which gives $\varphi_0(P_j)=\alpha P_j$ and $\varphi_0(C_j)=\alpha C_j.$
Summarizing,
 \[2\varphi_0(P_i)= \varphi_0[C_i, H] = [\varphi_0(C_i), H]+[C_i, \varphi_0(H)]=(\alpha+\beta)P_i,\]
 which gives $\alpha=\beta$ and $\varphi_0$ is trivial.

Let $\varphi_1$ be an odd   $\frac{1}{2}$-derivation. Then   $\llbracket\varphi_1,  \mathbf{ad}_{\mathbb{U}_i}\rrbracket$ is  an even $\frac{1}{2}$-derivation for $\mathbb{U} \in \{ P, C\}.$
Hence
\begin{center}
    $\llbracket \varphi_1,  \mathbf{ad}_{\mathbb{U}_i}\rrbracket=  \alpha_{\mathbb{U}_i} {\rm id},$
\end{center}then
\[
\alpha_{\mathbb{U}_i} J_{i,j}= \llbracket \varphi_1,  \mathbf{ad}_{\mathbb{U}_i} \rrbracket(J_{i,j})=
\varphi_1[\mathbb{U}_i, J_{i,j}]-[\mathbb{U}_i, \varphi_1(J_{i,j})]=
-\varphi_1(\mathbb{U}_j).
\]
Hence,
$\varphi_1(\mathbb{U}_j)=0.$
Let $\varphi_1(J_{j,k})=\sum_t( \gamma^{j,k}_t P_t+ \beta^{j,k}_t C_t)$.   Then   for pairwise distinct $i,j,k$, we have 
\begin{center}
$2 \varphi_1(J_{j,k})=
2\varphi_1 [J_{i,j},J_{i,k}]=
[\varphi_1(J_{i,j}),J_{i,k}]+[J_{i,j},\varphi_1(J_{i,k})]=$
$
 -\gamma^{i,j}_iP_k+\gamma^{i,j}_kP_i-\beta^{i,j}_iC_k+\beta^{i,j}_kC_i  +
\gamma^{i,k}_iP_j-\gamma^{i,k}_jP_i+\beta^{i,k}_iC_j-\beta^{i,k}_jC_i,
$\end{center}
which gives
$2\gamma^{j,k}_j=\gamma^{i,k}_i$,
$2\beta^{j,k}_j=\beta^{i,j}_i$   and $\gamma^{j,k}_t=\beta^{j,k}_t=0$ for every~$t\notin\{j,k\}$.
It follows
\begin{center}
    $2\gamma^{j,k}_j=\gamma^{i,k}_i=\frac{1}{2}\gamma^{j,k}_j$
and
$2\beta^{j,k}_j= \beta^{i,k}_i  =\frac{1}{2}\beta^{j,k}_j.$
\end{center}
Obviously,
\begin{center}$\varphi_1(J_{j,k})=0$ and from
$0=2\varphi_1[J_{j,k},H]=[J_{j,k},\varphi_1(H)]$ follows $\varphi_1(H)=0.$
\end{center}
Summarizing, we have   that   $\varphi_1$ is trivial.

\smallskip

    Hence, $\Delta(\mathfrak{gal}(d))$ is trivial and there are no non-trivial transposed Poisson structures defined on $\mathfrak{gal}(d)$.
\end{proof}

\section{TP-structures on
infinite extension of Galilean algebras  }

\begin{definition}  For every~$\ell \in \mathbb{Z} + \frac{1}{2}$, the infinite
  extension of Galilean algebra $\mathfrak{G}$ (depending on~$\ell$)
 (it seems that it first appeared in \cite{MT10})
 is generated by the following relations:
 \begin{longtable}{lcl}
     $[L^m,L^n] $&$= $&$(m-n)L^{m+n}$\\
     $[L^m,J^n_{i,j}] $&$=$&$ -n J^{n+m}_{i,j}$ \\
     $[J_{i,j}^m, J_{p,q}^n] $&$=$&$ \delta_{i,p} J_{j,q}^{m+n}  - \delta_{i,q} J_{j,p}^{m+n} - \delta_{j,p} J_{i,q}^{m+n} + \delta_{j,q} J_{i,p}^{m+n} $\\
     $[L^m,P_i^k]$&$ =$&$ (\ell m -k)P_i^{m+k}$\\
     $[J_{i,j}^m, P_t^k]$&$=$&$ \delta_{i,t} P_j^{m+k} - \delta_{j,t}P_i^{m+k}$
 \end{longtable}
where
$d\in \mathbb{N}, n,m,  t   \in \mathbb{Z}$,
$k\in \mathbb{Z} + \frac{1}{2}$,  $1 \le i \ne j\le d$,   and $J_{i,j}$ are   antisymmetric tensors.
\end{definition}

\begin{theorem}
There are no non-trivial transposed Poisson   structures defined in  $\mathfrak{G}.$
\end{theorem}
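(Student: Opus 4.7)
The plan is to mimic the $\mathfrak{gal}(d)$ argument: I will show that every $\frac{1}{2}$-derivation of $\mathfrak{G}$ is trivial, then invoke Theorem~\ref{princth}. The key structural feature is the natural $\frac{1}{2}\mathbb{Z}$-grading of $\mathfrak{G}$ by the upper index ($L^m$, $J^n_{i,j}$ in integer degree, $P^k_i$ in half-integer degree), with respect to which the bracket is degree-preserving. Hence any $\frac{1}{2}$-derivation $\varphi$ decomposes formally as $\varphi = \sum_{s \in \frac{1}{2}\mathbb{Z}} \varphi_s$ with $\varphi_s$ of fixed shift $s$, each still a $\frac{1}{2}$-derivation. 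I reduce to the case $\varphi$ homogeneous of shift $s$, and split into the parities $s \in \mathbb{Z}$ and $s \in \mathbb{Z}+\frac{1}{2}$.

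For integer shift $s$, I write
\[
 \varphi(L^m) = \alpha_m L^{m+s} + \sum_{p<q}\beta^m_{p,q} J^{m+s}_{p,q}, \quad \varphi(J^n_{i,j}) = \gamma^n_{i,j} L^{n+s} + \sum_{p<q}\delta^{n;p,q}_{i,j} J^{n+s}_{p,q}, \quad \varphi(P^k_i) = \sum_j \mu^k_{i,j} P^{k+s}_j.
\]
Projecting the restriction $\varphi|_W$ to the Witt subalgebra $W=\langle L^m\rangle$ onto $W$ and invoking Theorem~\ref{teo_witt} forces $\alpha_m$ to be independent of $m$, say equal to $\alpha$. The identities for $[L^m, J^n_{i,j}]$ and $[J^m_{i,j}, J^n_{p,q}]$---combined with the $\mathfrak{so}_d$-style pairwise-distinct-index trick used in the $\mathfrak{gal}(d)$ proof, available thanks to $d\geq 3$---should eliminate $\beta^m_{p,q}$ and $\gamma^n_{i,j}$ and reduce $\delta^{n;p,q}_{i,j}$ to a single diagonal scalar $\delta^n$; a parallel argument with $[J^m_{i,j}, P^k_t]$ should reduce $\mu^k_{i,j}$ to $\mu^k \delta_{i,j}$. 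Finally, I substitute into the identity applied to $[L^m, P^k_i] = (\ell m - k) P^{m+k}_i$: specialising $m=0$ expresses $\mu^k$ in terms of $k$ and $\alpha$, and a second specialisation together with $\ell, k \in \mathbb{Z}+\frac{1}{2}$ yields an over-determined system forcing $\alpha=0$ when $s\neq 0$, whence (via the $[L^m, J^n_{i,j}]$ recursion) $\delta^n = \mu^k = 0$, while for $s=0$ the system collapses to $\alpha = \delta^n = \mu^k$, giving $\varphi = \alpha\cdot\mathrm{id}$.

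The half-integer shift case swaps the integer- and half-integer-graded parts: $\varphi(L^m), \varphi(J^n_{i,j}) \in \langle P^\star_\star\rangle$ while $\varphi(P^k_i) \in \langle L^\star, J^\star_{\star,\star}\rangle$. No scalar candidate now exists, and specialising the $[L^m,L^n]$ identity at $n=0$ and then at a second value of $(m,n)$ should yield a numerical contradiction (again exploiting $\ell\in\mathbb{Z}+\frac{1}{2}$), forcing the $P$-coefficients of $\varphi(L^m)$ to vanish; analogous arguments with $[J^m_{i,j}, J^n_{p,q}]$ and $[L^m, P^k_i]$ kill the remaining components. The hardest part I anticipate is the integer-shift case with $s\neq 0$: Theorem~\ref{teo_witt} guarantees that the Witt algebra itself admits many non-trivial shifted $\frac{1}{2}$-derivations, so the Witt restriction alone cannot exclude the shift. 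What ultimately does is the half-integer nature of $\ell$ and $k$ in the coefficient $(\ell m - k)$ appearing in $[L^m, P^k_i]$, which produces an $\ell$-dependent inconsistency under specialisation and leaves no room for any non-zero shift to extend from $W$ to all of $\mathfrak{G}$.
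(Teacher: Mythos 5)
Your strategy is correct and, as far as I can check, every step you flag with a ``should'' does go through; but the execution is organized differently from the paper's, so a comparison is worthwhile. The paper works only with the coarse $\mathbb{Z}_2$-grading ($\{L,J\}$ even, $\{P\}$ odd): for the even part it quotes Zusmanovich's description of $\Delta(\mathfrak{so}_n\otimes\mathbb{C}[t,t^{-1}])$ to control the $J$-sector, uses Theorem~\ref{teo_witt} on $\mathfrak{G}_0/\langle J_{i,j}^n\rangle$ exactly as you do, but then kills the Witt shifts already in the $[L^m,J^n_{i,j}]$ relation (via $\beta_{t-m}(-n)+\alpha_{t-m}(n+m-t)=0$), turning to $P_i^k$ only afterwards, where half-integrality enters through $2k-v\neq 0$; for the odd part it uses the commutator trick $\llbracket \varphi_1,\mathbf{ad}_{P_i^k}\rrbracket=\alpha\,{\rm id}$ coming from Lemma~\ref{[D1,D2]}. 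You instead refine to the $\tfrac12\mathbb{Z}$-grading by the upper index, which subsumes the $\mathbb{Z}_2$-grading (integer shift $=$ even, half-integer shift $=$ odd), lets you dispense with both the Zusmanovich citation and the $\mathbf{ad}$-commutator lemma, and correctly isolates the one genuinely delicate point: shifted $\frac12$-derivations of the Witt algebra exist, so the shift must be excluded elsewhere. I verified the two computations you leave implicit. For integer shift $s\neq 0$, setting $m=0$ in the $[L^m,P_i^k]$ identity gives $\mu^k=\alpha\,\frac{\ell s-k}{s-k}$ (well defined since $s-k\in\mathbb{Z}+\frac12$), and re-substitution yields a rational identity in $k$ whose failure reduces to $m(\ell-1)s(\ell m-2s)=0$ for all $m$ --- impossible for $s\neq 0$ and $\ell\in\mathbb{Z}+\frac12$ --- so $\alpha=0$ and everything collapses. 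For half-integer shift, $n=0$ gives $\lambda^m_t=\frac{\ell m-s}{m-s}\lambda^0_t$ and then $(m,n)=(1,-1)$ forces $\ell^2+\ell-2=0$, i.e.\ $\ell\in\{1,-2\}$, which is excluded; the remaining sectors die as you indicate. Two small points to make explicit in a write-up: the decomposition $\varphi=\sum_s\varphi_s$ may have infinitely many nonzero homogeneous components, so you should state that triviality of each $\varphi_s$ forces $\varphi_s=0$ for $s\neq 0$ (a map of nonzero shift is a scalar multiple of the identity only if it vanishes) and hence $\varphi=\alpha\,{\rm id}$; and the pairwise-distinct-index arguments require $d\geq 3$, an assumption the paper's own proof also uses tacitly.
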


\begin{proof}
We will use the standard way for proving that each transposed Poisson  structure is trivial.
After proving that each $\frac{1}{2}$-derivation of $\mathfrak{G}$ is trivial,
we are applying Theorem \ref{princth} and having that there are no non-trivial transposed Poisson   structures on   $\mathfrak{G}.$

\smallskip

It is clear that $\mathfrak{G}$ is a $\mathbb{Z}_2$-graded algebra:
$\mathfrak{G}_0= \langle  L^m, J_{i,j}^n \mid m,n\in \mathbb{Z}, 1\leq i\neq j\leq d \rangle$  and
  $\mathfrak{G}_1=  \langle  P_{t}^k \mid t\in \mathbb{Z}, k\in \mathbb{Z} + \frac{1}{2}  \rangle$.
On the other hand   $\langle  J_{i,j}^n \mid 1\leq i\neq j\leq d, n\in \mathbb{Z}  \rangle$  is isomorphic to $\mathfrak{so}_n \otimes \mathbb{C}[t, t^{-1}]$ and $\langle L^m \mid m\in \mathbb{Z}\rangle$ is isomorphic to the Witt algebra.

Let  $\varphi_0$   be an    even $\frac{1}{2}$-derivation.
It is easy to see that    $\varphi_0( J_{i,j}^n) \subseteq \langle  J_{i,j}^n \mid 1\leq i\neq j\leq d, n\in \mathbb{Z}  \rangle$. Thanks to \cite{zusma}, the description of $\frac{1}{2}$-derivations of
$\mathfrak{so}_n \otimes \mathbb{C}[t, t^{-1}]$ is controlling by the space of $\frac{1}{2}$-derivations of $\mathfrak{so}_n:$
\begin{center}
$\Delta(\mathfrak{so}_n \otimes \mathbb{C}[t, t^{-1}]) \cong \Delta(\mathfrak{so}_n) \otimes \mathbb{C}[t, t^{-1}].$
\end{center}
$\Delta(\mathfrak{so}_n)$  is trivial.
Hence, we may assume
$\varphi_0(J_{i,j}^n) =\sum_t \alpha_{t} J_{i,j}^{n+t}=\sum_t \alpha_{t-n} J_{i,j}^{t}$.  It follows that~$\varphi_0$ induces a $\frac{1}{2}$-derivation on the Witt algebra $\langle L^m\mid m\in \mathbb{Z}  \rangle\cong \mathfrak{G}_0/\langle  J_{i,j}^n \mid n\in \mathbb{Z}, 1\leq i\neq j\leq d\rangle$. So we may assume~$\varphi_0(L^m) =\sum_t \beta_{t-m} L^{t} +\sum_{u,v,t}\gamma_{u,v}^{m,t} J_{u,v}^t$. By applying the $\frac{1}{2}$-derivation~$\varphi_0$ on $ -n J^{n+m}_{i,j}=[L^m,J^n_{i,j}]$, we obtain that
{\tiny \begin{longtable}{l}
 $2(-n)\sum_t \alpha_{t-m} J_{i,j}^{n+t} =2(-n)\sum_t \alpha_{t-m-n} J_{i,j}^{t}$\\
  $=[\sum_t \beta_{t-m} L^{t} +\sum_{u,v,t}\gamma_{u,v}^{m,t} J_{u,v}^t, J_{i,j}^{n}]
  + [L^m, \sum_t \alpha_{t-n} J_{i,j}^{t}]$\\
  $=\sum_t \beta_{t-m}(-n)J_{i,j}^{n+t}
  +\sum_{u,v,t}\gamma_{u,v}^{m,t} (\delta_{u,i}J_{v,j}^{n+t}-\delta_{u,j}J_{v,i}^{n+t}
  -\delta_{v,i}J_{u,j}^{n+t}+\delta_{v,j}J_{u,i}^{n+t})
  +\sum_t \alpha_{t-n}(-t) J_{i,j}^{m+t}$\\
  $=\sum_t \beta_{t-m}(-n)J_{i,j}^{n+t}
  +\sum_{u,v,t}\gamma_{u,v}^{m,t} (\delta_{u,i}J_{v,j}^{n+t}-\delta_{u,j}J_{v,i}^{n+t}
  -\delta_{v,i}J_{u,j}^{n+t}+\delta_{v,j}J_{u,i}^{n+t})   +\sum_t \alpha_{t-m}(-t-n+m) J_{i,j}^{n+t}.$
\end{longtable}}
 
 It follows that
$$\sum_{v\neq i, t}\gamma_{i,v}^{m,t}J_{v,j}^{n+t}
+\sum_{v\neq j, t}\gamma_{j,v}^{m,t}(-J_{v,i}^{n+t})
+\sum_{u\neq i, t}\gamma_{u,i}^{m,t}(-J_{u,j}^{n+t})
+\sum_{u\neq j, t}\gamma_{u,j}^{m,t}(J_{u,i}^{n+t})=0.
$$
 So we obtain~$\gamma_{i,p}^{m,t}=\gamma_{p,i}^{m,t} $ for all~$i\neq p$.
Since~$J_{i,j}=-J_{j,i}$, we obtain~$\varphi_0(L^m) =\sum_t \beta_{t-m} L^{t}$ and thus
$$\sum_t \beta_{t-m}(-n)J_{i,j}^{n+t}+\sum_t \alpha_{t-m}(n+m-t) J_{i,j}^{n+t}=0.$$
It follows that for all fixed $n,m,t$, we have
$$\beta_{t-m}(-n)+\alpha_{t-m}(n+m-t)=0.$$
For~$n\neq 0$ and $t=m$, we deduce~$\alpha_0=\beta_0$; for~$n=0$ and~$t\neq m$, we deduce~$\alpha_p=0$ for all nonzero integer~$p$. It follows that~$\beta_p=0$ for all nonzero integer~$p$. Hence, $\varphi_0$ is trivial on $\mathfrak{G}_0$. Assume~$\varphi_0(x)=\alpha x$ for all~$x\in \mathfrak{G}_0$.

Next, we consider $\varphi_0(P_i^k)=\sum_{  u\in \mathbb{Z}, v\in \mathbb{Z}+\frac{1}{2} } \alpha_{i,k}^{u,v} P_u^v$  and the relation on~$-[L^0, P_i^k]$,  which gives
\begin{center}
    $2k\sum_{ u\in \mathbb{Z}, v\in \mathbb{Z}+\frac{1}{2} } \alpha_{i,k}^{u,v}  P_u^v=2k\varphi_0(P^k_i)=\alpha k P^k_i-[L^0, \varphi_0(P^k_i)]=\alpha k P^k_i+\sum_{  u\in \mathbb{Z}, v\in \mathbb{Z}+\frac{1}{2} }  \alpha_{i,k}^{u,v}v P_u^v.$
\end{center}

So we have~$$(2k-v)\sum_{  u\in \mathbb{Z}, v\in \mathbb{Z}+\frac{1}{2} } \alpha_{i,k}^{u,v}  P_u^v=\alpha k P_i^k.$$
Note that~$2k-v\neq 0$. We deduce~$\alpha_{i,k}^{i,k}=\alpha$ and~$\alpha_{i,k}^{u,v}=0$ if~$(u,v)\neq (i,k)$. Therefore, $\varphi_0$ is trivial.

Let $\varphi_1$ be an odd $\frac{1}{2}$-derivation.    Then
  $\llbracket {\rm ad}_{P^k_i}, \varphi_1 \rrbracket$ gives a $\frac{1}{2}$-derivation,
  which is trivial. Hence,
  \begin{center}$\llbracket \mathbf{ad}_{P^k_i}, \varphi_1 \rrbracket = \alpha_{i,k} {\rm id}.$\end{center}
It is easy to see
\begin{center}
    $ \alpha_{i,k} L^m = \llbracket \mathbf{ad}_{P^k_i}, \varphi_1 \rrbracket (L^m)=
[P_i^k, \varphi_1(L^m)]-\varphi_1[P_i^k,L^m]=(\ell m -k) \varphi_1(P_i^{k+m}),$
\end{center}
which gives $\varphi_1(P_i^{k})=0$.
Let us consider
$\varphi_1(J_{i,j}^m).$
Obviously,

\begin{align*}
  \varphi_1(J_{i,j}^m)=\frac{1}{2}\left([\varphi_1(J_{t,i}^m ), J_{t,j}^0]+[ J_{t,i}^m, \varphi_1(J_{t,j}^0)]\right) & \in\bigcap_{t\neq i, t\neq j}\mathsf{span}\{P_t^k, P_i^k, P_j^k \mid t\neq i, t\neq j, k\in\mathbb{Z}+\frac{1}{2} \}&\\
  &=\mathsf{span}\{ P_i^k, P_j^k \mid   k\in\mathbb{Z}+\frac{1}{2} \}.&
\end{align*}
So we may assume
$\varphi_1(J_{i,j}^m)=\sum_k(\alpha_{i,j}^{m,k}P_i^{k}+\beta_{i,j}^{m,k}P_j^{k})$.
By applying~$\varphi_1$ on~$J_{i,j}^{m+n}=[J_{t,i}^{m},J_{t,j}^{n}]$ for~$t\notin \{i,j\}$, we obtain
\begin{align*}
  2\sum_k(\alpha_{i,j}^{m+n,k}P_i^{k}+\beta_{i,j}^{m+n,k}P_j^{k})
&=[\sum_k(\alpha_{t,i}^{m,k}P_t^{k}+\beta_{t,i}^{m,k}P_i^{k}), J_{t,j}^{n}]
+[J_{t,i}^{m},\sum_k(\alpha_{t,j}^{n,k}P_t^{k}+\beta_{t,j}^{n,k}P_j^{k})]&\\
&=-\sum_k\alpha_{t,i}^{m,k}P_j^{n+k}
+\sum_k\alpha_{t,j}^{n,k}P_i^{m+k}&\\
&=-\sum_k\alpha_{t,i}^{m,k-n}P_j^{k}
+\sum_k\alpha_{t,j}^{n,k-m}P_i^{k}.&
\end{align*}
So for all fixed pairwise distinct numbers~$i,j,k$, we have~$2\alpha_{i,j}^{m+n,k}=\alpha_{t,j}^{n,k-m}$ and~$2\beta_{i,j}^{m+n,k}
=-\alpha_{t,i}^{m,k-n}$. Let~$m=0$. Then we easily deduce that~$\alpha_{i,j}^{n,k}=0=\beta_{i,j}^{n,k}$.

Noting,
\begin{center}
    $0=-2\varphi_1(J_{i,j}^{n+1})=2\varphi_1[L^n,J_{i,j}^1]=[\varphi_1(L^n),J_{i,j}^1],$
\end{center}
    we have $\varphi_1(L^n)=0$ and  thus we obtain   $\varphi_1=0.$

\smallskip

    Hence, $\Delta(\mathfrak{G})$ is trivial and there are no non-trivial transposed Poisson structures defined on $\mathfrak{G}$.
\end{proof}

\section{TP-structures on the conformal centrally extended Galilei algebras}

\begin{definition}  For every~$0< \ell \in \mathbb{N}-\frac{1}{2}$, the  
 conformal centrally extended Galilei algebra $\widetilde{\mathfrak{g}}^{(\ell)}$  (it seems that it first appeared in \cite{negro}) is generated by the following relations:
 \begin{longtable}{lcllcllcl}
    $[h,e]$ & $=$ & $2e,$ & $[h,f]$ & $=$ & $-2f,$ & $[e,f]$ & $=$ & $h,$\\
    $[h,p_k]$ & $=$ & $2(\ell-k)p_k,$ & $[e,p_k]$ & $=$ & $kp_{k-1},$ & $[f,p_k]$ & $=$ & $(2 \ell -k)p_{k+1},$ \\
\multicolumn{9}{c}{$[p_k,p_{2 \ell - k}] =  (-1)^{k + \ell + \frac{1}{2}}k! (2 \ell - k)! z,$}
\end{longtable}
  where  $k$  satisfies that  $0 \le k \le  2\ell$.

\end{definition}

\begin{remark}
$\widetilde{\mathfrak{g}}^{(\frac{1}{2})}$ is the Schrödinger algebra considered in \cite{FKL}.
\end{remark}

\begin{theorem}
 There are no non-trivial transposed Poisson structures on $\widetilde{\mathfrak{g}}^{(\ell)}$.
\end{theorem}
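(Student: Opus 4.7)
The plan is to apply Theorem \ref{princth}, so it suffices to prove that every $\frac{1}{2}$-derivation of $\widetilde{\mathfrak{g}}^{(\ell)}$ is trivial. First I would observe that $\widetilde{\mathfrak{g}}^{(\ell)}$ carries a natural $\mathbb{Z}_2$-grading with even part $\mathfrak{sl}_2\oplus\langle z\rangle$ (where $\mathfrak{sl}_2=\langle h,e,f\rangle$) and odd part $\langle p_0,\ldots,p_{2\ell}\rangle$; this is consistent with the brackets because $[p_k,p_{2\ell-k}]$ is a scalar multiple of the even central element $z$. Hence $\Delta(\widetilde{\mathfrak{g}}^{(\ell)})$ splits into even and odd components, which I would handle separately.

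For an even $\varphi_0$, I would exploit the fact that $\Delta(\mathfrak{sl}_2)$ is trivial (the same classical input used in the $\mathfrak{so}_n$-step of the Galilean proof) to write $\varphi_0|_{\mathfrak{sl}_2}=\alpha\,\mathrm{id}$, ruling out a possible $z$-contribution by matching $\mathrm{ad}_h$-weights (since $h,e,f$ sit in weights $0,2,-2$ and $z$ has weight $0$). Next, $\mathrm{ad}_h$ acts on $p_k$ by $2(\ell-k)$; because $\ell\in\mathbb{N}-\tfrac{1}{2}$ and $k\in\mathbb{Z}$ these eigenvalues are pairwise distinct odd integers, and none of them equals the weight $0$ of $z$, so $\varphi_0(p_k)=\lambda_k p_k$. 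The relation $[e,p_k]=kp_{k-1}$ then chains all $\lambda_k$ together to the common value $\alpha$, and finally $\varphi_0$ applied to $[p_k,p_{2\ell-k}]=C_k z$ pins down $\varphi_0(z)=\alpha z$.

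For an odd $\varphi_1$, my strategy is the super-commutator trick of Lemma \ref{[D1,D2]}: since each $p_k$ is odd, $\mathrm{ad}_{p_k}$ is an odd $1$-superderivation, and $\llbracket\varphi_1,\mathrm{ad}_{p_k}\rrbracket_s$ is an even $\frac{1}{2}$-superderivation, hence equals $\alpha_k\,\mathrm{id}$ by the previous paragraph. Writing $\varphi_1(p_k)=a_k h+b_k e+c_k f+d_k z$ and $\varphi_1(h)=\sum_j\eta_j p_j$, evaluating this identity on $h$ kills $b_k$ and $c_k$ and expresses $a_k$ in terms of $\alpha_k$; evaluating it on $e$ then forces $\alpha_k=0$, so $a_k=0$ and $\varphi_1(p_k)=d_k z$. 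Two independent expressions for the coefficients $\zeta_{2\ell-k}$ of $\varphi_1(e)$ in terms of the $d_k$'s then arise — one from the $\mathrm{ad}_{p_k}$-identity applied to $e$, one from directly expanding $2\varphi_1[e,p_k]=2k\varphi_1(p_{k-1})$ — and comparing them forces $d_{k-1}=0$ for $1\le k\le 2\ell$. The boundary value $d_{2\ell}$ is handled by the analogous argument applied to $[f,p_{2\ell-1}]=p_{2\ell}$. Once $\varphi_1(p_k)=0$ is established, the identities $2\varphi_1[h,p_k]=[\varphi_1(h),p_k]=\eta_{2\ell-k}C_k z$ give $\eta_j=0$, hence $\varphi_1(h)=0$, and analogously $\varphi_1(e)=\varphi_1(f)=0$; finally $\varphi_1(z)=0$ follows from $[h,z]=0$ because $\mathrm{ad}_h$ is injective on the odd part.

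The main obstacle will be the odd case: four coupled families $d_k,\eta_j,\zeta_j,\xi_j$ must be tracked simultaneously. The key feature that makes everything collapse is the half-integrality of $\ell$, which ensures that no denominators vanish in the recurrences and no accidental weight coincidences occur between the $\mathfrak{sl}_2$-part (weights $0,\pm 2$) and the $p_k$-part (odd integer weights). With these vanishings in hand, $\Delta(\widetilde{\mathfrak{g}}^{(\ell)})$ is trivial and Theorem \ref{princth} yields the desired conclusion.
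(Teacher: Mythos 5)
Your proposal is correct and follows essentially the same route as the paper: the $\mathbb{Z}_2$-grading with even part $\mathfrak{sl}_2\oplus\langle z\rangle$, triviality of $\Delta(\mathfrak{sl}_2)$ plus $\mathrm{ad}_h$-weight arguments for the even part (where what half-integrality of $\ell$ must really exclude is the coincidence $2(\ell-t)=4(\ell-k)$, i.e.\ $t=2k-\ell$, since for a $\frac{1}{2}$-derivation an off-diagonal component survives at twice the weight, not at an equal weight --- your ``pairwise distinct eigenvalues'' phrasing is the criterion for ordinary derivations, but the needed non-coincidence does hold here), and the supercommutator $\llbracket\varphi_1,\mathrm{ad}_{p_k}\rrbracket$ for the odd part. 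The only cosmetic difference is the endgame of the odd case, where the paper expands $2\varphi_1[e,f]=2\varphi_1(h)$ to kill the $z$-coefficients $\rho^{(k)}$ while you compare the $\mathrm{ad}_{p_k}$-identity with the Leibniz rule on $[e,p_k]$ and $[f,p_k]$; both arguments work.
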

\begin{proof}
We will use the standard way for proving that each transposed Poisson  structure is trivial.
After proving that each $\frac{1}{2}$-derivation of $\widetilde{\mathfrak{g}}^{(\ell)}$ is trivial,
we are applying Theorem \ref{princth} and having that there are no non-trivial transposed Poisson structures on   $\widetilde{\mathfrak{g}}^{(\ell)}.$

\bigskip

It is easy to see that $\widetilde{\mathfrak{g}}^{(\ell)}$ is $\mathbb{Z}_2$-graded, $\widetilde{\mathfrak{g}}^{(\ell)}=(\widetilde{\mathfrak{g}}^{(\ell)})_0 \oplus (\widetilde{\mathfrak{g}}^{(\ell)})_1$, where $(\widetilde{\mathfrak{g}}^{(\ell)})_0$ is generated by $e,f,h,z$, and $(\widetilde{\mathfrak{g}}^{(\ell)})_1$ by all $p_k$. Next, it clear that $(\widetilde{\mathfrak{g}}^{(\ell)})_0$ is  the   direct sum of the simple algebra $\mathfrak{sl}_2$ and the one-dimensional algebra generated by $z$.

Let $\varphi_0$   be an even  $\frac{1}{2}$-derivation.
Then it has the following type
$\varphi_0(x) = \alpha x$ for any $x \in \{e,f,h\}$ and
$\varphi_0 (z) = \beta z$.
Next, let $\varphi_0(p_k) = \sum_{t=0}^{2 \ell} \beta_t^{(k)}p_t$.
By
\begin{align*}
   4(\ell-k) \sum_{t=0}^{2 \ell} \beta_t^{(k)}p_t &=  4(\ell - k)\varphi_0(p_k)= 2\varphi_0[h,p_k] =2(\ell-k) \alpha p_k+[h, \varphi_0(p_k)]&\\
  &  =2(\ell-k) \alpha p_k+2(\ell-t) \sum_{t=0}^{2 \ell} \beta_t^{(k)}p_t, &
 \end{align*}
  it follows~$2(\ell-k)\beta_k^{(k)}=2(\ell-k)\alpha$, and for~$t\neq k$, we have~$2(\ell-2k+t)\beta_t^{(k)}=0 $. Since~$\ell\neq k$ and~$\ell\neq 2k-t$, we deduce that~$\varphi_0(p_k) = \alpha p_k$.
It is easy to see, that
\begin{center}
 $\varphi_0(z)= (-1)^{ \ell + \frac{1}{2}} \big((2 \ell)!\big)^{-1}\varphi_0[p_{0},p_{2\ell}]=\alpha z.$
\end{center}
Hence, $\varphi_0$ is trivial.

Let $\varphi_1$ be an odd $\frac{1}{2}$-derivation.
It is clear that $\llbracket \varphi_1,\mathbf{ad}_{p_k} \rrbracket$ is an even $\frac{1}{2}$-derivation
for any $k = 0,\ldots, 2\ell$.
Set
$\varphi_1(x) = \sum_{t=0}^{2\ell} \gamma_t^{(x)} p_t$ for any $x\in \{e,f,h,z\}.$
It is easy to see, that
\begin{center}
$4(\ell-k)\varphi_1(p_k)=2\varphi_1[h, p_k]=[\varphi_1(h), p_k]+[h, \varphi_1(p_k)],$
\end{center}
hence $\varphi_1(p_k) \in \langle e,f,z \rangle$
and by the similar way, we can obtain that  $\varphi_1(p_k) \in  \langle z \rangle,$
i.e.
$\varphi_1(p_k) =  \rho^{(k)} z$ for any $k=0,\ldots, 2\ell.$
For any $x\in \{e,f,h,z\}$ we obtain
\begin{longtable}{lcl}
 $\alpha_k f $ & $=$&$ \llbracket\varphi_1, \mathbf{ad}_{p_k}\rrbracket(f) = \varphi_1[p_k,f] - [p_k, \varphi_1(f)]$ \\
 & $=$& $ -(2\ell-k) \rho^{(k+ 1)} z  -
  \gamma_{2\ell-k}^{(f)}  (-1)^{k + \ell + \frac{1}{2}}k! (2\ell - k)! z,$\\
 $\alpha_k e $ & $=$&$ \llbracket\varphi_1, \mathbf{ad}_{p_k}\rrbracket(e) = \varphi_1[p_k,e] - [p_k, \varphi_1(e)]$ \\
 & $=$& $ -k \rho^{(k- 1)} z  -
  \gamma_{2\ell-k}^{(e)}  (-1)^{k + \ell + \frac{1}{2}}k! (2\ell - k)! z,$\\
 $\alpha_k h $ & $=$&$ \llbracket\varphi_1, \mathbf{ad}_{p_k}\rrbracket(h) = \varphi_1[p_k,h] - [p_k, \varphi_1(h)]$ \\
 & $=$& $ -2(\ell-k) \rho^{(k)} z  -
  \gamma_{2\ell-k}^{(h)}  (-1)^{k + \ell + \frac{1}{2}}k! (2\ell - k)! z,$
\end{longtable}
which gives $\alpha_k=0$ and

\begin{longtable}{lcl}
$\gamma^{(f)}_{k}$&$=$&$-\rho^{(2\ell-k+1)} \Big((-1)^{3\ell-k+\frac{1}{2}}  (2\ell-k)! (k-1)!\Big)^{-1},$\\

$\gamma^{(e)}_{k}$&$=$&$-\rho^{(2\ell-k-1)} \Big((-1)^{3\ell-k+\frac{1}{2}}  (2\ell-k-1)! k!\Big)^{-1},$\\

$\gamma^{(h)}_{k}$&$=$&$2(\ell-k)\rho^{(2\ell-k)} \Big((-1)^{3\ell-k+\frac{1}{2}}  (2\ell-k)! k!\Big)^{-1}.$

\end{longtable}

 It follows that
\begin{longtable}{lcl}
 $ 2 \sum_{k=0}^{2\ell} \gamma_k^{(h)} p_k$&$=$&$2\varphi_1(h)=2\varphi_1[e,f]=[\varphi_1(e),f]+[e,\varphi_1(f)]$\\
&$=$&$\sum_{k=0}^{2\ell} \gamma_k^{(e)}(-1)(2\ell-k)p_{k+1}
+\sum_{k=0}^{2\ell} \gamma_k^{(f)}kp_{k-1}$\\
&$=$&$\sum_{k=1}^{2\ell+1} \gamma_{k-1}^{(e)}(-1)(2\ell-k+1)p_{k}
+\sum_{k=-1}^{2\ell-1} \gamma_{k+1}^{(f)}(k+1)p_{k}$\\
&$=$&$\sum_{k=1}^{2\ell} \gamma_{k-1}^{(e)}(-1)(2\ell-k+1)p_{k}
+\sum_{k=0}^{2\ell-1} \gamma_{k+1}^{(f)}(k+1)p_{k}.$
\end{longtable}
So we deduce~$2\gamma_0^{(h)}=\gamma_{1}^{(f)}$, $2\gamma_{2\ell}^{(h)}=-\gamma_{2\ell-1}^{(e)}$ and
$$2\gamma_k^{(h)}=\gamma_{k-1}^{(e)}(-1)(2\ell-k+1)+\gamma_{k+1}^{(f)}(k+1)$$
for~$1\leq k\leq 2\ell-1$.  Combining these with the above formulas on~$\gamma_{k}^{(x)}$ for $x\in \{e,f, h\}$, we deduce  that~$\rho^{(2\ell)}=0$, $\rho^{(0)}=0$, and for~$1\leq k\leq 2\ell-1$, we deduce that~$2(\ell-k)\rho^{(2\ell-k)}=0$; Since~$\ell\neq k$, we obtain~$\rho^{(2\ell-k)}=0$.

It follows that $\varphi_1 =0.$ Hence, $\Delta(\mathfrak{\widetilde{\mathfrak{g}}^{(\ell)}})$ is trivial and there are no non-trivial transposed Poisson structures defined on $\widetilde{\mathfrak{g}}^{(\ell)}.$
\end{proof}

\section{TP-structures on
$\ell$-super  Galilean conformal algebras}

\begin{definition}\label{de17}
  For every~$\ell\in \frac{1}{2}\mathbb N$,  the 
 $\ell$-super Galilean conformal algebra $\mathfrak{gca}(\ell)$
 (it seems that it first appeared in \cite{AS16}) is a Lie superalgebra $\mathfrak{gca}(\ell) = \mathfrak{gca}_0(\ell) \oplus \mathfrak{gca}_1(\ell)$ where $\mathfrak{gca}_0(\ell)$ is generated by all $L_m,P_k$, $c_1$, $c_2$, and $\mathfrak{gca}_1(\ell)$ is generated by all $G_m,H_k$, and the multiplication table is given by the following relations:
 \begin{longtable}{lcl}
     $[L_m,L_n] $&$= $&$(m-n)L_{m+n} + c_1(m^3-m) \delta_{m+n,0}$\\
     $[L_m,P_k] $&$=$&$ (\ell m - k)P_{m+k} + c_2(m^3-m)\delta_{m+k,0}\delta_{\ell,1}$ \\
     $[G_m,G_n] $&$=$&$ 2L_{m+n} + c_1(4m^2-1) \delta_{m+n,0}$\\
     $[G_m,H_k] $&$ =$&$ 2P_{m+k} + c_2(4m^2-1)\delta_{m+k,0}\delta_{\ell,1}$\\
     $[L_m, G_n] $&$=$&$ \left(\frac{m}{2}-n\right) G_{m+n}$ \\
     $[L_m,H_k] $&$=$&$ \left(\frac{2\ell -1}{2}m - k\right)   H_{m+k}   $    \\
     $[P_k, G_m] $&$=$&$ \left(\frac{k}{2} - \ell m\right) H_{k+m},$ \\
 \end{longtable}
where
$m,n\in \mathbb{Z}$ and   $k \in \mathbb{Z}+\ell$.
\end{definition}

  By convention, if~$\ell \neq 1$, then   $\mathfrak{gca}_0(\ell)$  is generated by $\{L_m,P_k,c_1\mid m\in \mathbb{Z}, k\in \mathbb{Z} +\ell\}$.

\begin{remark}
It is clear that  $\mathfrak{gca}_0(\ell)$ is $\mathbb Z_2$-graded and $(\mathfrak{gca}_0(\ell))_0$ isomorphic to the Virasoro algebra,   where $(\mathfrak{gca}_0(\ell))_0$ is generated by $\{L_m,c_1\, |\, m \in \mathbb{Z}\}$.
\end{remark}

\begin{theorem}
  There are no transposed Poisson structures defined on $\mathfrak{gca}(\ell)$.
\end{theorem}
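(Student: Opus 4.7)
The plan is to invoke Theorem \ref{princth} and reduce the claim to showing that $\Delta(\mathfrak{gca}(\ell))$ is trivial, following the template established in the preceding sections. Since $\mathfrak{gca}(\ell)$ is $\mathbb{Z}_2$-graded, an arbitrary $\frac{1}{2}$-derivation $\varphi$ decomposes as $\varphi=\varphi_0+\varphi_1$, with $\varphi_i$ homogeneous of degree $i$, and it suffices to show each summand is trivial.

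For the even part $\varphi_0$, I would first restrict attention to the Virasoro subalgebra $\mathcal{V}=\langle L_m, c_1\mid m\in\mathbb{Z}\rangle$. Using the $\mathbf{ad}_{L_0}$-weight decomposition of $\mathfrak{gca}(\ell)$ (the eigenvalue of $\mathbf{ad}_{L_0}$ on $L_m$ is $-m$, on $P_k$ is $-k$, on $G_n$ is $-n$, on $H_k$ is $-k$, and $c_1,c_2$ are central), the identity $2\varphi_0[L_0,L_m]=[\varphi_0(L_0),L_m]+[L_0,\varphi_0(L_m)]$ forces $\varphi_0(L_m)$ into the $L_0$-weight $(-m)$-component of $\mathfrak{gca}_0(\ell)$; by comparing coefficients in a second bracket (e.g.\ $[L_1,L_m]$), any $P_m$-component that would appear when $\ell\in\mathbb{Z}$ is ruled out, since $P_k$'s satisfy Leibniz coefficients $(\ell m - k)$ distinct from the Witt coefficient $(m-n)$. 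Hence $\varphi_0$ preserves $\mathcal{V}$, and by Theorem~\ref{virasoro} its restriction is scalar multiplication by some $\alpha\in\mathbb{C}$. Next, applying $\varphi_0$ to $[L_m,P_k]=(\ell m-k)P_{m+k}+c_2(m^3-m)\delta_{m+k,0}\delta_{\ell,1}$ and solving recursively gives $\varphi_0(P_k)=\alpha P_k$ and (when $\ell=1$) $\varphi_0(c_2)=\alpha c_2$. Finally, applying the same bootstrap to $[L_m,G_n]=(\tfrac{m}{2}-n)G_{m+n}$ and $[L_m,H_k]=(\tfrac{2\ell-1}{2}m-k)H_{m+k}$ pins down $\varphi_0(G_n)=\alpha G_n$ and $\varphi_0(H_k)=\alpha H_k$, with consistency checked against $[G_m,G_n]=2L_{m+n}+\cdots$ and $[G_m,H_k]=2P_{m+k}+\cdots$. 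This shows $\varphi_0$ is trivial.

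For the odd part $\varphi_1$, the plan is to leverage Lemma~\ref{[D1,D2]}: $\llbracket\varphi_1,\mathbf{ad}_{G_m}\rrbracket$ is an even $\frac{1}{2}$-derivation, hence equals $\alpha_m\,\mathrm{id}$ by the previous paragraph. Evaluating the relation $\alpha_m x=\varphi_1[G_m,x]-[G_m,\varphi_1(x)]$ at $x=L_n$ gives $\alpha_m L_n=(\tfrac{m}{2}-n)\varphi_1(G_{m+n})-[G_m,\varphi_1(L_n)]$; since the right-hand side lies in $\mathfrak{gca}_1(\ell)$ while $L_n$ lies in $\mathfrak{gca}_0(\ell)$, one forces $\alpha_m=0$ and simultaneously extracts $\varphi_1(G_n)$. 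Repeating the procedure with $\mathbf{ad}_{H_k}$ and feeding the outputs back into the relations $[L_m,H_k]$, $[P_k,G_m]$, and $[G_m,H_k]=2P_{m+k}+\cdots$, one eliminates all remaining coefficients and concludes $\varphi_1\equiv 0$.

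The main obstacle is the case $\ell=1$, where the supplementary central term $c_2$ forces an enlarged weight-zero subspace and requires a more careful separation of $L_m$- and $P_m$-components in step one; a secondary difficulty is the bookkeeping in the odd case, where the coefficient $\tfrac{2\ell-1}{2}m-k$ in $[L_m,H_k]$ vanishes for specific $(m,k)$ and one must choose auxiliary brackets to avoid degenerate relations. Both difficulties are handled as in the infinite-extension Galilean argument: all equations produced are linear in the unknown Fourier coefficients and decouple once indexed by $L_0$-weight.
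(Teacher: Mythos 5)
Your overall strategy coincides with the paper's: reduce to showing $\Delta(\mathfrak{gca}(\ell))$ is trivial via Theorem \ref{princth}, split a $\frac{1}{2}$-superderivation into even and odd parts, settle the Witt/Virasoro direction by Theorem \ref{virasoro}, and kill the odd part with supercommutators against inner odd derivations. The genuine gap sits exactly where you yourself locate ``the main obstacle,'' namely $\ell=1$. You claim the $P$-component of $\varphi_0(L_m)$ is ruled out ``since $P_k$'s satisfy Leibniz coefficients $(\ell m-k)$ distinct from the Witt coefficient $(m-n)$.'' For $\ell=1$ these coefficients are \emph{not} distinct: $[L_m,P_k]=(m-k)P_{m+k}+c_2(m^3-m)\delta_{m+k,0}$ reproduces the Virasoro structure constants verbatim, so $\mathsf{span}\{P_k,c_2\}$ is an adjoint copy of the Virasoro algebra acting on itself. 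Concretely, the even linear map $\varphi(L_m)=P_m$, $\varphi(c_1)=c_2$, $\varphi(G_n)=H_n$, $\varphi(P_k)=\varphi(H_k)=\varphi(c_2)=0$ preserves every $\mathbf{ad}_{L_0}$-weight and satisfies $2\varphi[a,b]=[\varphi(a),b]+[a,\varphi(b)]$ on each defining relation of $\mathfrak{gca}(1)$ (for instance $[\varphi(L_m),L_{-m}]+[L_m,\varphi(L_{-m})]=2(2m)P_0+2(m^3-m)c_2=2\varphi[L_m,L_{-m}]$, and similarly for $[G_m,G_n]$ and $[L_m,G_n]$). It is a non-scalar element of the centroid, so no weight decomposition and no comparison of structure constants of the kind you propose can eliminate it. This component --- the part of the even $\frac{1}{2}$-derivation sending $\langle L_m,c_1\rangle$ into $\langle P_k,c_2\rangle$ --- is exactly what the paper calls $\psi_1$; it is where the bulk of the paper's computation lives, with separate case analysis for $\ell\notin\mathbb{N}$, $\ell\in\mathbb{N}\setminus\{1\}$ and $\ell=1$, and your sketch replaces that analysis with an assertion that is false at $\ell=1$. (The existence of the map above also means the $\ell=1$ case of the statement itself deserves re-examination: the paper's own step concluding $\beta_{0,0}=0$ only has access to the pairs $(m,n)=(\pm1,\mp1)$ because of the excluded central terms, and this map survives every constraint derived there.)

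A secondary flaw is in your odd-part argument. You claim $\alpha_m=0$ because ``the right-hand side lies in $\mathfrak{gca}_1(\ell)$ while $L_n$ lies in $\mathfrak{gca}_0(\ell)$.'' Since $\varphi_1$ is odd, $\varphi_1(G_{m+n})$ is even and $[G_m,\varphi_1(L_n)]$ is again even, so both sides of $\alpha_m L_n=(\frac{m}{2}-n)\varphi_1(G_{m+n})-[G_m,\varphi_1(L_n)]$ lie in $\mathfrak{gca}_0(\ell)$ and the parity mismatch you invoke does not occur. The vanishing of $\alpha_m$, $\varphi_1(G_n)$ and $\varphi_1(L_m)$ has to be extracted by comparing coefficients of $L_p$, $P_t$, $c_1$, $c_2$ for well-chosen index pairs, which is what the paper does at length; this part of your plan is repairable, but the first gap is an actual obstruction to the method as proposed.
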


\begin{proof}
We will use the standard way for proving that each transposed Poisson   structure is trivial.
After proving that each $\frac{1}{2}$-derivation of $\mathfrak{gca}(\ell)$ is trivial,
we are applying Theorem \ref{princth} and having that there are no non-trivial transposed Poisson structures on   $\mathfrak{gca}(\ell).$

\smallskip

  Note   that $\mathfrak{gca}_0(\ell)$ is a $\mathbb{Z}_2$-graded algebra; $\mathfrak{gca}_0(\ell)  = (\mathfrak{gca}_0(\ell))_0   \oplus  (\mathfrak{gca}_0(\ell))_1$, where $(\mathfrak{gca}_0(\ell))_0$ is generated by $\{L_m,c_1\, |\, m \in \mathbb{Z}\}$, and $(\mathfrak{gca}_0(\ell))_1$ by $  \{P_k, c_2   \mid \, k \in \mathbb{Z}+\ell \}$.

Let $\varphi$ be a $\frac{1}{2}$-superderivation of $\mathfrak{gca}(\ell)$. Then  we obtain $\varphi = \varphi_0 + \varphi_1$, and $\varphi_0|_{\mathfrak{gca}_0(\ell)} =  \psi_0 + \psi_1$ is a $\frac{1}{2}$-derivation of $\mathfrak{gca}_0(\ell)$,   where $\psi_0 =(\varphi_0|_{\mathfrak{gca}_0(\ell)})_0$, and $\psi_1 = (\varphi_0|_{\mathfrak{gca}_0(\ell))})_1.$ By Theorem \ref{virasoro}, $\psi_0$ is a trivial $\frac{1}{2}$-derivation of $(\mathfrak{gca}_0(\ell))_0$, say
$\psi_0(L_m) = \varkappa L_m$, $m\in \mathbb{Z}$ and $\psi_0(c_1) = \varkappa c_1.$

To calculate $\psi_0(P_k)$ for any $k\in \mathbb{Z}+\ell$ we set
\begin{center}
$\psi_0(P_k) = \sum_{t\in \mathbb{Z}+\ell}\alpha_t^{(k)}P_t + \rho^{(k)}c_2$ \  and \ $\psi_0(c_2) = \sum_{t \in \mathbb{Z}+\ell} \beta_t P_t + \rho c_2$,    \end{center}
where almost all $\alpha_t^{(k)}, \rho^{(k)}$, $\beta_t$ are zero.

We have
\begin{longtable}{lcl}
    $2\psi_0[L_m, P_k]$ &$=$ & $ [\psi_0(L_m), P_k] +  [L_m, \psi_0(P_k)]$\\
    & $=$ &$   [\varkappa L_m, P_k] +  \sum \alpha_t^{(k)} [L_m, P_t]$ \\
    & $=$ & $ \varkappa (\ell m -k)P_{m+k} +  \varkappa (m^3-m) \delta_{m+k,0}\delta_{\ell,1} c_2$ \\
    & & $+ \sum\alpha_t^{(k)}(\ell m -t) P_{m+t} +   \sum \alpha_t^{(k)} (m^3-m) \delta_{m+t,0}\delta_{\ell,1} c_2.$
\end{longtable}

On the other hand
\begin{longtable}{lcl}
    $\psi_0[L_m,P_k]$ &$ =$ &$ (\ell m - k) \psi_{0}(P_{m+k}) + (m^3-m) \delta_{m+k,0}\delta_{\ell,1} \psi_0(c_2)$ \\
    &$=$ & $(\ell m - k) \sum \alpha_t^{(m+k)}P_t + (\ell m - k) \rho^{(m+k)} c_2$\\
    & &$ + (m^3-m)\delta_{m+k,0}\delta_{\ell, 1}  \sum \beta_t P_t +  (m^3-m)\delta_{m+k,0}\delta_{\ell, 1}\rho c_2.$
\end{longtable}

It follows that
\[
 \begin{cases}
   \frac{1}{2} \varkappa (\ell m-k) + \frac{1}{2} (\ell m - k) \alpha_k^{(k)} = (\ell m - k) \alpha_{m+k}^{(m+k)} + (m^3 - m) \delta_{m+k,0}\delta_{\ell, 1} \beta_{m+k}, \\
   \frac{1}{2} (\ell m - t) \alpha_t^{(k)} = (\ell m - k) \alpha_{m+t}^{(m+k)} + (m^3 - m) \delta_{m+k,0}\delta_{\ell, 1} \beta_{m+t},  \qquad t \ne k,\\
   \frac{1}{2} \varkappa (m^3 - m) \delta_{m+k,0}\delta_{\ell,1}  + \frac{1}{2} \alpha_{-m}^{(k)} (m^3 - m) \delta_{\ell,1 }  = (\ell m - k) \rho^{(m+k)} + (m^3-m) \delta_{m+k,0}\delta_{\ell, 1} \rho.
 \end{cases}
\]

If $m=0$ we then get
 \begin{longtable}{lcl}
  $  \varkappa k +  k \alpha_k^{(k)}$ &$ =$& $2 k \alpha_k^{(k)},$\\
  $ t \alpha_t^{(k)}$ &$ =$&$2 k \alpha_{t}^{(k)}, \qquad t \ne k,$\\
  $k \rho^{(k)} $& $= $& $0,$
 \end{longtable}
hence all $\alpha_k^{(k)} = \varkappa$ for $k \ne 0$; all $\alpha_t^{(k)} = 0$ if $t \ne k$ and $t \ne 2k$; and all $\rho^{(k)} = 0$ for $k \ne 0.$  If~$\ell\notin \mathbb{Z}$, then we have~$\alpha_t^{(k)}=0$ for all~$t\neq k$ and~$\alpha_k^{(k)}=\varkappa$ for all~$k$.
If~$\ell\in \mathbb{Z}$, then   for   $ 0\neq$    $t=2k$, we can set $m=1$, then we have $(\ell-2k)\alpha_{2k}^{(k)}=0$. So, for $\ell \ne 2k$, we have $\alpha_{2k}^{(k)}=0$   if~$k\neq 0$.  For the case  $\ell=2k \neq 0  $,  we can take $t=m+2k$, with $m \ne 0$ and $m \ne -k$, then we have $\alpha_{2k}^{(k)}=0$ for all $k \neq 0 $.
  It is easy to see that if we set $k=0$  and $m\neq 0$,   then the first equality implies that   $\ell \alpha_0^{(0)} = \ell \varkappa$,    then $\alpha_0^{(0)} =  \varkappa$ if $\ell \ne 0.$ Putting $\ell = 0$   and~$1=m=-k$,   by the first equality,    we obtain~$\alpha_0^{(0)} =  \varkappa$,
  therefore all $\alpha_k^{(k)} = \varkappa$, $k \in \mathbb{Z}+\ell.$

     Next, we then get $(m^3-m)\delta_{m+k,0}\delta_{\ell, 1} \beta_{m+k} = 0$, hence $\beta_0 =0$ because of in the case $\ell\ne 1$ the letter $c_2$ is not involved in $\mathfrak{gca}_0(\ell)$ by convention.  Similarly, settting~$m=-k\neq -t$, we deduce~$\beta_t=0$ for all~$t\neq 0$.

   Now we consider the coefficients of~$c_2$. By convention, we have~$\ell=1$. Let~$m=1$. Then the third equality implies that~$(1 - k) \rho^{(1+k)}=0$. It follows that~$\rho^{(t)}=0$ for~$t\neq 2$. Setting~$m=0$ and~$k=2$, we obtain~$\rho^{(2)}=0$. So we deduce~$\rho^{(t)}=0$ for~$t$.   It follows that $\rho = \varkappa$ and this shows that all even $\frac{1}{2}$-derivations of $\mathfrak{gca}_0(\ell)$ are trivial.

 Let~$\psi_1$ be an odd $\frac{1}{2}$-derivation of $\mathfrak{gca}_0(\ell)$. Then for~$P_k$,  the map $\llbracket \psi_1,\mathbf{ad}_{P_k} \rrbracket$ is a trivial even $\frac{1}{2}$-derivation of $\mathfrak{gca}_0(\ell)$.  Assume that $\llbracket \psi_1,\mathbf{ad}_{P_k} \rrbracket=\alpha_{k} {\rm id}$. Suppose that~$\psi_1(P_k)=\sum_{m}\alpha_{k,m}L_m+\rho_kc_1$. Then by~$\llbracket \psi_1,\mathbf{ad}_{P_k} \rrbracket(P_t)=-[P_k,  \psi_1(P_t)]=\alpha_{k}P_t$ for all~$t$ we deduce~$\alpha_{t,m}(\ell m-k)=0$ if~$t\neq k+m$.  Let~$k\neq \ell m$ and~$k\neq t-m$, we obtain~$\alpha_{t,m}=0$ if~$t-m\neq k$. Since~$k$ is arbitrary, we have~$\alpha_{t,m}=0$ for all~$t,m$, and thus~$\psi_1(P_k)=\rho_kc_1$ and~$\llbracket \psi_1,\mathbf{ad}_{P_k} \rrbracket=0$. Since~$[\psi_1(L_m), P_k]=0$, we obtain
\begin{longtable}{lcl}
$0$&$=$&$\frac{1}{2}[\psi_1(L_m), P_k]+\frac{1}{2}[L_m,\psi_1(P_k)]=\psi_1[L_m, P_k]$\\
&$=$&$(\ell m - k)\psi_1(P_{m+k}) + \psi_1(c_2)(m^3-m)\delta_{m+k,0}\delta_{\ell,1}.$\end{longtable}
For~$m=0$ and~$k\neq 0$, we obtain~$\rho_k=0$ for~$k\neq 0$.
 If~$\ell\notin \mathbb{Z}$, then we have~$\psi_1(P_k)=0$ for all~$k$. If~$\ell\in \mathbb{Z}$, then for  $m=1=-k$, we obtain~$(\ell+1)\rho_0=0$. Since~$\ell\neq -1$, we  obtain $\rho_0=0$ and thus~$\psi_1(P_k)=0$ for all~$k$. So~$\psi_1(c_2)(m^3-m)\delta_{m+k,0}\delta_{\ell,1}=0$ and thus~$\psi_1(c_2)=0$.

Now we assume~$\psi_1(L_m)=\sum_k\beta_{m,k}P_k+\rho_m'c_2$. Then for all~$m\neq -n$ or~$m\in \{1,-1,0\}$ or~$n=0$, we have
\begin{align*}
&2(m-n)\big(\sum_k\beta_{m+n,k}P_k+\rho_{m+n}'c_2\big)&\\
= &2(m-n)\psi_1(L_{m+n})=[\psi_1(L_m),L_n]+[L_m, \psi_1(L_n)]&\\
=&-\sum_k\beta_{m,k}[L_n,P_k]+ \sum_k\beta_{n,k}[L_m,P_k]&\\
=&-\sum_k\beta_{m,k}(\ell n - k)P_{n+k}   -c_2\beta_{m,-n}(n^3-n)\delta_{\ell,1}&\\
&+\sum_k\beta_{n,k}(\ell m - k)P_{m+k}
+ c_2\beta_{n,-m}(m^3-m)\delta_{\ell,1}.&
\end{align*}
So for all~$k,n$ and for all~$m$ satisfying~$m\neq -n$ or~$m\in \{1,-1,0\}$ or~$n=0$, we  deduce that
$$2(m-n)\rho_{m+n}'=-\beta_{m,-n}(n^3-n)\delta_{\ell,1}
+\beta_{n,-m}(m^3-m)\delta_{\ell,1}$$
and
$$2(m-n)\beta_{m+n,k}=-\beta_{m,k-n}(\ell n-k+n)+\beta_{n,k-m}(\ell m-k+m). $$
For~$m=1$ and~$n=-1$, we have
$$4\beta_{0,k}=-\beta_{1,k+1}(-\ell-k-1)+\beta_{-1,k-1}(\ell-k+1); $$
For~$m=1$ and~$n=0$, we have
$$2\beta_{1,k}=-\beta_{1,k}(-k)+\beta_{0,k-1}(\ell -k+1); $$
For~$m=0$ and~$n=-1$, we have
$$2\beta_{-1,k}=-\beta_{0,k+1}(-\ell-k-1)+\beta_{-1,k}(-k). $$
Moreover, for~$n=0$, we deduce that~$(2m-k)\beta_{m,k}=(\ell m-k+m)\beta_{0,k-m}$ for all~$m,k$.

If~$\ell\notin \mathbb{N}$, then we deduce
$\beta_{1,k+1}=\frac{\ell-k}{1-k}\beta_{0,k}$, $\beta_{-1,k-1}=\frac{\ell+k}{1+k}\beta_{0,k}$ and thus
$$\big((\ell-k)(\ell+k+1)(1+k)+(\ell+k)(\ell-k+1)(1-k)-4(1-k)(1+k)\big)\beta_{0,k}=0,$$
which follows that~$(\ell+2)(\ell-1)\beta_{0,k}=0$. So we have~$\beta_{0,k}=0$ and thus~$\beta_{m,k}=0$ for all $m,k$.

If~$1\neq \ell\in \mathbb{N}$, then  by setting~$k=2m\neq 0$ and~$n=0$, we obtain~$\beta_{0,m}=0$ for all~$m\neq 0$ and thus~$\beta_{m,k}=0$ for all $m,k$.

Now we assume~$\ell=1$. Then we have~$\beta_{m,k}=\beta_{0,k-m}$ if~$k\neq 2m$. For~$n=0$ and~$k=m\neq 0$, we obtain $\beta_{m,m}=\beta_{0,0}$ for all~$m\neq 0$.
So for~$m\neq -2n$ and~$n\neq -2m$, we deduce that
\begin{center}$2(m-n)\rho_{m+n}'=-\beta_{0,-n-m}(m^3-n^3+m-n)$.
\end{center}So for~$m\neq n$, $m\neq -2n$ and~$n\neq -2m$, we obtain~\begin{center}$2\rho_{m+n}'=-\beta_{0,-n-m}(m^3-n^3+m-n)
=-\beta_{0,-n-m}(m^2+mn+n^2+1)$.\end{center} It follows that~$\rho_k'=\beta_{0,k}=0$ for all~$k$, and thus~$\beta_{m,k}=0$ for all~$k\neq 2m$. For~$k=2(m+n)$, $m\neq 0$ and~$n\notin\{ 0, m\}$,  we deduce that~$\beta_{t,2t}=0$. So we obtain~$\beta_{m,k}=0$ for all~$m,k$.

As a conclusion, we know that~$\psi_1=0$ and thus all $\frac{1}{2}$-derivations of $\mathfrak{gca}_0(\ell)$ are trivial.
 
 Now we show that every even $\frac{1}{2}$-derivation~$\varphi_0$ of $\mathfrak{gca}(\ell)$ is trivial.  By the above reasoning, we may assume that~$\varphi_0(x)=\varkappa x$ for all~$x\in \mathfrak{gca}_0(\ell)$.

Suppose that~\begin{center}$\varphi_0(G_m)
=\sum_{k}\mu_{m,k}H_k+\sum_{p}\nu_{m,p}G_p$
and~$\varphi_0(H_k)
=\sum_{t}\mu_{k,t}'H_t+\sum_{p}\nu_{k,p}'G_p$.\end{center}
By applying~$\varphi_0$ on the last relation of Definition~\ref{de17}, we have
$$2\Big(\frac{k}{2}-\ell m\Big)\varphi_0(H_{k+m})
=\varkappa \Big(\frac{k}{2}-\ell m\Big)H_{k+m}+\sum_{p}\nu_{m,p}(\frac{k}{2}-\ell p)H_{k+p}\in \mathsf{span}\{H_k\mid k\in \mathbb{Z}+\ell\}.$$  It follows that~$\varphi_0(H_k)=\sum_{t}\mu_{k,t}'H_t$ and thus
$$2\Big(\frac{k}{2}-\ell m\Big)(\sum_{t}\mu_{k+m,t}'H_t)
=\varkappa \Big(\frac{k}{2}-\ell m\Big)H_{k+m}+\sum_{p}\nu_{m,p}\Big(\frac{k}{2}-\ell p\Big)H_{k+p}.$$
So we have~$2\mu_{k+m,k+m}'=\varkappa +\nu_{m,m}$ for all~$k,m$ satisfying~$k\neq 2\ell m$; and for~$p\neq m$, we obtain
$$2 (\frac{k}{2}-\ell m)\mu_{k+m,k+p}'=\nu_{m,p}(\frac{k}{2}-\ell p).$$
Similarly, by applying $\varphi_0$ on the relation involving~$[L_m,H_k]$, we have
\begin{longtable}{lcl}
  $2(\frac{2\ell-1}{2}m-k)\sum_{t}\mu_{k+m,t}'H_t $&$=$&$2\varphi_0([L_m,H_k])$\\
  &$=$&$\varkappa(\frac{2\ell-1}{2}m-k)H_{k+m}
  +\sum_{t}\mu_{k,t}'[L_m,H_t]$\\
  &$=$&$\varkappa(\frac{2\ell-1}{2}m-k)H_{k+m}
  +\sum_{t}\mu_{k,t}' (\frac{2\ell-1}{2}m-t)H_{t+m}.$
\end{longtable}
So for~$\frac{2\ell-1}{2}m\neq k$, we have
$2\mu_{k+m,k+m}'=\varkappa+\mu_{k,k}'$
and~$$ 2\Big(\frac{2\ell-1}{2}m-k\Big)\mu_{k+m,t+m}'=  \Big(\frac{2\ell-1}{2}m-t\Big)\mu_{k,t}' \ \  (\forall t\neq k).$$
Let~$m=0$. It follows that~$\mu_{k,k}'=\varkappa$ for all~$k\neq 0$; and~$\mu_{k,t}'=0$ for all~$t\notin \{k, 2k\}$.  Combining this with the above equality $2 (\frac{k}{2}-\ell m)\mu_{k+m,k+p}'=\nu_{m,p}(\frac{k}{2}-\ell p) (\forall m\neq p)$,
we deduce that~$\nu_{m,p}=0$ for all~$m\neq p$, and thus~$\mu_{k,t}'=0$ for all~$k\neq t$. For~$\ell\in \mathbb{Z}$, let~$0\neq k=-m$,  we deduce that~$\mu_{0,0}'=\mu_{k,k}'=\varkappa$; For~$\ell\notin \mathbb{Z}$, we have~$k\notin \mathbb{Z}$.  So we have~$\varphi_0(H_k)=\varkappa H_k$ for all~$k$.

Since~$2\mu_{k+m,k+m}'=\varkappa +\nu_{m,m}$ for all~$k,m$ satisfying~$k\neq 2\ell m$, we have~$\nu_{m,m}=\varkappa $ and thus~$\varphi_0(G_m)
=\sum_{k}\mu_{m,k}H_k+\varkappa G_m$ for all~$m$.

So we have
\begin{longtable}{lcl}
$2(\frac{m}{2}-n)(\sum_{k}\mu_{m+n,k}H_k$&$+$&$\varkappa G_{m+n})=2(\frac{m}{2}-n)\varphi_0(G_{m+n})$\\
&$=$&$2\varphi_0([L_m, G_n])
=\varkappa(\frac{m}{2}-n)G_{m+n}+[L_m,\sum_{k}\mu_{n,k}H_k+\varkappa G_{n}]$\\
&$=$&$2\varkappa(\frac{m}{2}-n)G_{m+n}
+\sum_{k}\mu_{n,k}(\frac{2\ell-1}{2}m-k)H_{m+k}.$
\end{longtable}
Let~$m=2n$. Then we obtain~$\mu_{n,k}=0$ if~$(2\ell-1)n\neq k $.  In particular, if~$\ell\notin \mathbb{N}$, then~$(2\ell-1)n\neq k $ and thus~$\mu_{n,k}=0$ for all~$n,k$.  If~$\ell\in \mathbb{Z}$, then we set~$m=0$ and thus
$$(-2n)\Big(\sum_{k}\mu_{n,k}H_k+\varkappa G_n\Big)=2\varkappa(-n)G_{n}
+\sum_{k}\mu_{n,k}(-k)H_{k}.$$
So~$(2n-k)\mu_{n,k}=0$, in particular, $\mu_{n,k}=0$ for all~$k\neq 2n$ and thus
$\varphi_0(G_n)
=\mu_{n,2n}H_{2n}+\varkappa G_n$ for all~$n$. But then the above formula becomes
\begin{center}
$2(\frac{m}{2}-n)\Big(\mu_{m+n,2(m+n)}H_{2(m+n)}+\varkappa G_{m+n}\Big)
=2\varkappa(\frac{m}{2}-n)G_{m+n}
+\mu_{n,2n}(\frac{2\ell-1}{2}m-2n)H_{m+2n}.$
\end{center}
Let~$m\neq 0$ and let~$\frac{2\ell-1}{2}m\neq 2n$. We have~$\mu_{n,2n}=0$.
So~$\varphi_0(G_n)=\varkappa G_n$ and thus~$\varphi_0$ is trivial.

As a conclusion,  all even $\frac{1}{2}$-derivations of $\mathfrak{gca}(\ell)$ are trivial.

It is known the supercommutator of a $\frac{1}{2}$-superderivation and one superderivation gives a new $\frac{1}{2}$-superderivation.
Now, let $\mathbf{ad}_x$ be an inner odd derivation of $\mathfrak{gca}(\ell),$ then $\llbracket \varphi_1, \mathbf{ad}_x\rrbracket_s$ is an even $\frac{1}{2}$-derivation of $\mathfrak{gca}(\ell)$, which is trivial.    Assume~$\llbracket \varphi_1, \mathbf{ad}_x\rrbracket_s=\alpha_{x} {\rm id}$.

Suppose $\varphi_1(G_n)=\sum_{m}\mu_{n,m}'L_m+\sum_{t}\nu_{n,t}'P_t
+\rho_{n,1}'c_1+\rho_{n,2}'c_2$ and~$\varphi_{1}(L_m)=\sum_{p}\alpha_{m,p}G_p+\sum_{t}\beta_{m,t}H_t$.
Then we have
\begin{longtable}{lcl}
  $\alpha_{G_n} L_m$&$=$&$\llbracket\varphi_1, \mathbf{ad}_{G_n}\rrbracket_s(L_m)
  =\varphi_1[G_n,L_m]+[G_n,\varphi_1(L_m)]$\\
  &$ =$&$-(\frac{m}{2}-n)(\sum_{p}\mu_{n+m,p}'L_p+\sum_{t}\nu_{n+m,t}'P_t
+\rho_{n+m,1}'c_1+\rho_{n+m,2}'c_2)$\\
&&$+\sum_{p}\alpha_{m,p}[G_n,G_p]+\sum_{t}\beta_{m,t}[G_n, H_t]$\\
 &$=$&$-(\frac{m}{2}-n)(\sum_{p}\mu_{n+m,p}'L_p+\sum_{t}\nu_{n+m,t}'P_t
+\rho_{n+m,1}'c_1+\rho_{n+m,2}'c_2)$\\
&&$+\sum_{p}\alpha_{m,p}(2L_{n+p}+c_1(4n^2-1)\delta_{n+p,0})
+\sum_{t}\beta_{m,t}(2P_{n+t}+c_2(4n^2-1)\delta_{n+t,0}\delta_{\ell,1})$
\end{longtable}
For~$m=2n$, we have~$\alpha_{2n,p}=0=\beta_{2n,t}$ for all~$n,p,t$ satisfying~$n\neq p$; and~$\alpha_{G_n}=2\alpha_{2n,n}$. In particular, $\varphi_1(L_{2n})=\alpha_{2n,n}G_n$ for all~$n$.    Let~$m=2q\neq \pm 2n$. Then we have
$$\alpha_{G_n} L_{2q}=2\alpha_{2n,n} L_{2q}=-(q-n)\Big(\sum_{p}\mu_{n+2q,p}'L_p+\sum_{t}\nu_{n+2q,t}'P_t
+\rho_{n+2q,1}'c_1+\rho_{n+2q,2}'c_2\Big)+\alpha_{2q,q}2L_{n+q}$$
It follows that
$\varphi_1(G_n)=\sum_{m}\mu_{n,m}'L_m$ and~$\mu_{n+2q,p}'=0$ if~$p\neq 2q $ and~$p\neq n+q$.  Since~$n,q$ are arbitrary, it follows that~$\mu_{n,p}'=0$ for all~$n,p$. And thus~$\varphi_1(G_n)=0$ for all~$n$.

Then we have
\begin{longtable}{lcl}
 $0$&$=$&$2(\frac{m}{2}-n)\varphi_1 (G_{m+n})
=2\varphi_1[L_m,G_n]-[L_m, \varphi_1(G_n)]$\\
 & $=$&$[\varphi_1(L_m),G_n]=\sum_{p}\alpha_{m,p}[G_p,G_n]
 +\sum_{t}\beta_{m,t}[H_t,G_n]$\\
&$=$&$\sum_{p}\alpha_{m,p}\Big(2L_{p+n}+c_1(4p^2-1)\delta_{p+n,0}\Big)
+\sum_{t}\beta_{m,t}\Big(2P_{t+n}+c_2(4n^2-1)\delta_{n+t,0}\delta_{\ell,1}\Big).$
\end{longtable}
It follows that~$\alpha_{m,p}=0=\beta_{m,t}$ for all~$m,p,t$. In particular, we have~$\varphi_1(L_m)=0$ for all~$m$.
By applying~$\varphi_1$ in the relation involving~$[G_1,G_{-1}]$, we deduce that~$\varphi_1(c_1)=0$.

Suppose~$\varphi_1(H_k)=\sum_{p}\mu_{k,p}L_p+\sum_{t}\nu_{k,t}P_t
+\rho_{k,1}c_1+\rho_{k,2}c_2$.
Then we have
\begin{longtable}{lcl}
 $\alpha_{H_k} L_0$&$=$&$ \llbracket\varphi_1, \mathbf{ad}_{H_k}\rrbracket_s(L_0)= \varphi_1[H_k, L_0]+[H_k, \varphi_1(L_0)]$\\
& $=$&$k\varphi_1(H_{k})
=k(\sum_{p}\mu_{k,p}L_p+\sum_{t}\nu_{k,t}P_t
+\rho_{k,1}c_1+\rho_{k,2}c_2).$
\end{longtable}
It follows that~$\alpha_{H_0}=0$ (if $\ell\in \mathbb{Z}$) and~$\varphi_1(H_k)=\mu_{k,0}L_0$ for all~$k\neq 0$.
So we have
\begin{longtable}{lcl}
  $\alpha_{H_k} L_m$ &$=$&$ \llbracket\varphi_1, \mathbf{ad}_{H_k}\rrbracket_s(L_m)= \varphi_1[H_k, L_m]$\\
&$=$&$-\Big(\frac{2\ell-1}{2}m-k\Big)\varphi_1(H_{m+k})
=-\Big(\frac{2\ell-1}{2}m-k\Big)\mu_{m+k,0}L_0.$
\end{longtable}
For~$m\notin\{0, -k\}$, we deduce that~$\alpha_{H_k}=0$ and thus~$\varphi_1(H_k)=0$ for all~$k\neq 0$. If~$\ell\notin \mathbb{N}$, we have~$\varphi_1(H_k)=0$ for all~$k$.  If~$\ell\in \mathbb{N}$, then we have~$\frac{2\ell-1}{2}m+m\neq 0$ for all nonzero integer~$m$. Let~$k=-m\neq 0$. This is possible because~$\ell\in \mathbb{N}$. Since
\begin{center}
$2\varphi\Big((\frac{2\ell-1}{2}m+m)H_{0}\Big)=2\varphi_1[L_m,H_m]
=[\varphi_1 (L_m),H_{-m}]+[L_m,\varphi_1 (H_{-m})]=0,$
\end{center}
we obtain~$\varphi_1(H_0)=0$  and thus~$\varphi_1(H_k)=0$ for all~$k$.

Suppose~$\varphi_{1}(P_k)=\sum_{m}\alpha_{k,m}'G_m+\sum_{t}\beta_{k,t}'H_t$.
Then we have
\begin{longtable}{lcl}
  $0$&$=$&$2\varphi_1[P_k,G_n]-[P_k,\varphi_1(G_n)]
=[\varphi_1(P_k), G_n]$\\
&$=$&$\sum_{m}\alpha_{k,m}'[G_m,G_n]+\sum_{t}\beta_{k,t}'[H_t,G_n].$
\end{longtable}
It follows that~$\alpha_{k,m}'=0=\beta_{k,t}'$ for all~$k,m,t$.

Finally, by applying~$\varphi_1$ on the relation involving $[G_1, H_{-1}]$ (if~$\ell=1$), we have~$\varphi_1(c_2)=0$.
Hence, $\Delta(\mathfrak{gca}(\ell))$ is trivial and there are no non-trivial transposed Poisson structures defined on $\mathfrak{gca}(\ell).$
\end{proof}

\section{$\frac{1}{2}$-derivations of some Lie algebras}

\subsection{$\frac{1}{2}$-derivations and transposed Poisson structures of solvable Lie algebras}
It is known  that each finite-dimensional nilpotent Lie algebra has a non-trivial transposed Poisson structure ($\frac{1}{2}$-derivations, $\frac{1}{2}$-biderivations) \cite[Theorem 14]{bfk22}.
  These  results are motivating the question of the existence of non-trivial $\frac{1}{2}$-derivations of solvable Lie algebras, which will be answered in the present subsection.

\begin{lemma}
\label{decom}
Let~$\mL$ be a decomposable Lie algebra, (namely, $\mL$ is the direct sum of two nonzero ideals). Then~$\mL$ has non-trivial $\frac{1}{2}$-derivations.
\end{lemma}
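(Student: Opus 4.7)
The plan is to exhibit a concrete non-trivial $\frac{1}{2}$-derivation by exploiting the fact that the summands are ideals whose mutual bracket vanishes. Write $\mL = I_1 \oplus I_2$ with $I_1,I_2$ nonzero ideals, so that $[I_1,I_2]=0$. For any scalars $\alpha,\beta \in \mathbb{C}$ define $\varphi \colon \mL \to \mL$ by $\varphi(x_1+x_2) = \alpha x_1 + \beta x_2$ for $x_i \in I_i$. I would then check the identity of Definition~\ref{deltadif} with $\delta=\tfrac{1}{2}$: writing $a = a_1+a_2$ and $b = b_1+b_2$, one has $[a,b] = [a_1,b_1] + [a_2,b_2]$, hence
\[
\varphi[a,b] = \alpha[a_1,b_1] + \beta[a_2,b_2],
\]
while
\[
[\varphi(a),b] = \alpha[a_1,b_1] + \beta[a_2,b_2] = [a,\varphi(b)],
\]
so the defining relation $2\varphi[a,b] = [\varphi(a),b] + [a,\varphi(b)]$ is satisfied.

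Having verified that every such $\varphi$ is a $\frac{1}{2}$-derivation, the final step is to observe that choosing $\alpha \neq \beta$ (for instance, $\varphi$ equal to the projection onto $I_1$, i.e.\ $\alpha=1,\beta=0$) produces a map which is not a scalar multiple of the identity, since both $I_1$ and $I_2$ are nonzero. Thus $\varphi$ is a non-trivial element of $\Delta(\mL)$.

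There is no real obstacle here: the argument is a one-line verification once the ideal decomposition is used to kill the cross-bracket $[I_1,I_2]$. The only thing to be mindful of is the non-triviality check, which requires both summands to be nonzero (and is precisely the assumption in the statement). As a side remark, combining this lemma with Lemma~\ref{glavlem} and Theorem~\ref{princth} immediately yields that every decomposable Lie algebra also admits a non-trivial transposed Poisson structure, which is the form in which the lemma will be applied in the sequel to solvable Lie algebras.
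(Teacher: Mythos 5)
Your proof is correct and takes essentially the same approach as the paper: the paper's proof is exactly the projection onto one of the two ideal summands (your case $\alpha=0$, $\beta=1$), verified via $[I_1,I_2]=0$; your slightly more general family $\alpha\,\mathrm{id}_{I_1}\oplus\beta\,\mathrm{id}_{I_2}$ adds nothing essential but is fine. One caution about your closing side remark: Lemma~\ref{glavlem} and Theorem~\ref{princth} only give the passage \emph{from} a transposed Poisson structure \emph{to} a $\frac{1}{2}$-derivation (equivalently, no non-trivial $\frac{1}{2}$-derivations implies no non-trivial TP structures); the converse is not automatic, which is precisely why the paper proves Theorem~\ref{nilpotent} separately by constructing an explicit commutative associative product rather than by citing the existence of a non-trivial $\frac{1}{2}$-derivation.
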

\begin{proof}
  Assume that~$\mL=I\oplus J$. Then for all~$x=y+z\in \mL$, where~$y$ lies in~$I$ and~$z$ lies in~$J$, we define~$\varphi(x)=z$. Clearly~$\varphi$ is a non-trivial $\frac{1}{2}$-derivation of~$\mL$.
\end{proof}

In the light of Lemma~\ref{decom}, we shall study non-abelian indecomposable Lie algebras~$\mL$.  Moreover, we shall focus on Lie algebras~$\mL$ such that~$\mL\neq [\mL,\mL]$. For all subspaces~$V,W\subseteq \mL$, define~$\mathsf{Ann}_{V}(W)=\{\xx\in V\mid [\xx,W]=0\}$.
\begin{lemma}\label{ann}
If~$\mL$ is a   Lie algebra such that~$\mL\neq [\mL,\mL]$ and~$\mathsf{Ann}_{\mL}(\mL)\neq 0$, then~$\mL$ has non-trivial $\frac{1}{2}$-derivations.
\end{lemma}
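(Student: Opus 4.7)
The plan is to exhibit an explicit rank-one $\frac{1}{2}$-derivation, obtained by composing a linear functional that kills the derived subalgebra with multiplication into the center. Concretely, since $\mL\neq [\mL,\mL]$, I would pick any nonzero linear functional $f\colon \mL\to\mathbb{C}$ with $f|_{[\mL,\mL]}=0$; this exists by extending a basis of $[\mL,\mL]$ to one of $\mL$. Also pick a nonzero element $z\in \mathsf{Ann}_{\mL}(\mL)$, and set
\[
\varphi(x)\;:=\;f(x)\,z.
\]

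Next I would verify that $\varphi$ satisfies $2\varphi[x,y]=[\varphi(x),y]+[x,\varphi(y)]$. The left-hand side equals $2f([x,y])\,z$, which vanishes because $f$ annihilates $[\mL,\mL]$. The right-hand side equals $f(x)[z,y]+f(y)[x,z]$, which vanishes because $z$ is central. So both sides are zero and $\varphi$ is a genuine $\frac{1}{2}$-derivation.

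The remaining point is non-triviality. Observe that the image of $\varphi$ is contained in the one-dimensional subspace $\mathbb{C}z$. If $\varphi=\lambda\,\mathrm{id}$ for some scalar $\lambda$, then either $\lambda\neq 0$, which would force $\dim \mL=1$ and contradict the tacit assumption $\dim\mL\geq 2$ in the setting of the paper; or $\lambda=0$, in which case $f(x)z=0$ for all $x\in \mL$ forces $f=0$, contradicting the choice of $f$. Hence $\varphi$ is a non-trivial $\frac{1}{2}$-derivation.

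There is really no serious obstacle here beyond the mild bookkeeping needed to rule out the degenerate one-dimensional case; the lemma reduces to the general principle that any linear map $\mL\to Z(\mL)$ factoring through $\mL/[\mL,\mL]$ automatically yields a $\frac{1}{2}$-derivation, and the hypotheses are precisely what makes such a nonzero map exist.
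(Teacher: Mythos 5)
Your proof is correct, and it takes a genuinely more uniform route than the paper's. The paper argues in two cases according to whether $[\mathfrak{L},\mathfrak{L}]\cap \mathsf{Ann}_{\mathfrak{L}}(\mathfrak{L})$ is zero: when it is, the paper exhibits $\mathfrak{L}$ as the direct sum of the two nonzero ideals $V\oplus[\mathfrak{L},\mathfrak{L}]$ and $\mathsf{Ann}_{\mathfrak{L}}(\mathfrak{L})$ and invokes Lemma~\ref{decom}; when it is not, the paper constructs, basis vector by basis vector, a map vanishing on $[\mathfrak{L},\mathfrak{L}]$ and sending a complement onto a nonzero element $x_1$ of $[\mathfrak{L},\mathfrak{L}]\cap\mathsf{Ann}_{\mathfrak{L}}(\mathfrak{L})$. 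Your single formula $\varphi(x)=f(x)z$, with $f$ a nonzero functional killing $[\mathfrak{L},\mathfrak{L}]$ and $0\neq z\in\mathsf{Ann}_{\mathfrak{L}}(\mathfrak{L})$, subsumes both cases at once: the verification is the same computation (both sides of the $\frac{1}{2}$-derivation identity vanish, the left because $f$ annihilates commutators, the right because $z$ annihilates $\mathfrak{L}$), and you dispense with the case split, with the requirement that $z$ lie in the derived subalgebra, and with the appeal to Lemma~\ref{decom}. The only caveat is the one you already flag: a map with one-dimensional image can coincide with a nonzero multiple of the identity when $\dim\mathfrak{L}=1$, and indeed the lemma as literally stated fails for the one-dimensional abelian algebra; the paper's own proof tacitly assumes $\dim\mathfrak{L}>1$ as well (and only applies the lemma under that hypothesis, in Theorem~\ref{nontrslv}), so making the assumption explicit, as you do, is if anything an improvement.
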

\begin{proof}
  If~$[\mL,\mL]\cap \mathsf{Ann}_{\mL}(\mL)=0$, then there exists a subspace~$V$ of~$\mL$ such that~$\mL=V\oplus [\mL,\mL]\oplus \mathsf{Ann}_{\mL}(\mL)$  as vector spaces. But then~$V\oplus [\mL,\mL]$ and~$\mathsf{Ann}_{\mL}(\mL)$ are two nonzero ideals of~$\mL$.  By Lemma~\ref{decom}, we obtain that~$\mL$ has non-trivial $\frac{1}{2}$-derivations.

 If~$[\mL,\mL]\cap \mathsf{Ann}_{\mL}(\mL)\neq 0$, then there exists a nonzero element~$x_1\in [\mL,\mL]\cap \mathsf{Ann}_{\mL}(\mL)$. Assume that~$\mL=[\mL,\mL]\oplus W$ as vector space, where~$W$ is a nonzero subspace of~$\mL$. Now we extend~$x_1$ into a linear basis~$X$ of~$[\mL, \mL]$ and assume that~$Y$ is a linear basis of~$W$. Then we define an endomorphism~$\varphi$ of~$\mL$ by~$\varphi(x)=0$ for all~$x\in X$ and~$\varphi(y)=x_1$ for all~$y\in Y$. It follows that~$\varphi(\mL)\subseteq \mathsf{Ann}_{\mL}(\mL)$ and~$\varphi([\mL,\mL])=0$.  So we deduce that~$\varphi$ is a non-trivial $\frac{1}{2}$-derivation.
  \end{proof}

We shall prove that every nonabelian solvable finite dimensional Lie algebra over an algebraic closed field of zero characteristic has non-trivial $\frac{1}{2}$-derivations. Before proceeding to the proof, we first recall a well-known result on solvable Lie algebras, for instance, see~\cite[page 15]{hum72}.
\begin{theorem}\cite{hum72}\label{com-v}
  Let~$\mL$ be a solvable subalgebra of~$\mathfrak{gl}(V)$, where~$V$ is a finite dimensional nonzero vector space over an algebraic closed field of zero characteristic and~$\mathfrak{gl}(V)$ is the Lie algebra consisting of all the endomorphisms of~$V$.  Then~$V$ contains a common eigenvector for all the endomorphisms in~$\mL$.
  \end{theorem}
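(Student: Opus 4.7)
The plan is to prove Theorem \ref{com-v} by induction on $\dim \mL$, following the classical argument for Lie's theorem. The base case $\dim \mL = 0$ is trivial, since any nonzero vector of $V$ is then automatically a common eigenvector. The inductive step proceeds by producing a codimension-one ideal, applying the inductive hypothesis, and then extending.

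For the inductive step, since $\mL$ is solvable and nonzero, $[\mL,\mL] \subsetneq \mL$, so I can choose a codimension-one subspace $\mathfrak{K}$ with $[\mL,\mL] \subseteq \mathfrak{K}$; this $\mathfrak{K}$ is automatically a solvable ideal, of strictly smaller dimension. Writing $\mL = \mathfrak{K} \oplus \mathbb{C} z$ for a chosen $z \in \mL \setminus \mathfrak{K}$, the inductive hypothesis furnishes a nonzero $v \in V$ and a linear functional $\lambda \colon \mathfrak{K} \to \mathbb{C}$ with $x(v) = \lambda(x) v$ for every $x \in \mathfrak{K}$. The goal is then to find a common eigenvector for $\mL$ inside the weight space
$$W = \{w \in V \mid x(w) = \lambda(x) w \text{ for all } x \in \mathfrak{K}\},$$
which is nonzero since it contains $v$.

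The main step is to show that $W$ is $z$-stable. Given $w \in W$, I will set $U = \mathrm{span}\{w, zw, z^2 w, \ldots\}$ and prove by induction on $k$ that $x(z^k w) - \lambda(x) z^k w \in \mathrm{span}\{w, zw, \ldots, z^{k-1} w\}$, using that $\mathfrak{K}$ is an ideal so $[x,z] \in \mathfrak{K}$. Taking $n$ minimal with $z^n w$ a linear combination of the earlier vectors, $U$ becomes finite-dimensional, $z$-stable, and $\mathfrak{K}$-stable, with each $x \in \mathfrak{K}$ acting upper-triangularly with diagonal entries $\lambda(x)$. Hence $\mathrm{tr}(x|_U) = \lambda(x)\dim U$. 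Applied to $[x,z] \in \mathfrak{K}$ and noting that the trace of any commutator on the $z$-stable $U$ is zero, this gives $\lambda([x,z]) = 0$ in characteristic zero, which is exactly what is needed to conclude $zw \in W$. This trace/commutator step is the main obstacle, and is where both the characteristic-zero and finite-dimensionality hypotheses enter essentially.

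Finally, since $\mathbb{C}$ is algebraically closed and $W$ is a nonzero finite-dimensional $z$-invariant subspace, $z|_W$ admits an eigenvector $w_0 \in W$. Then $w_0$ is a common eigenvector for all of $\mL = \mathfrak{K} \oplus \mathbb{C} z$, completing the induction.
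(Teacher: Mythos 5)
Your proposal is correct: it is the classical Lie's-theorem argument (induction via a codimension-one ideal, the weight-space invariance lemma, and the trace-of-a-commutator trick to kill $\lambda([x,z])$ in characteristic zero). The paper does not prove Theorem~\ref{com-v} itself but cites \cite{hum72}, and your argument is essentially the proof given there, so there is nothing to compare beyond noting that your sketch matches the cited source (only a cosmetic point: you write $\mathbb{C}$ where the statement allows any algebraically closed field of characteristic zero, but the argument is unchanged).
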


\begin{theorem}\label{nontrslv}
  Let~$\mL$ be a  solvable finite dimensional Lie algebra over an algebraic closed field of zero characteristic such that   $\mathsf{dim}(\mL)>1$.  Then~$\mL$ has non-trivial $\frac{1}{2}$-derivations.
\end{theorem}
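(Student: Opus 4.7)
The plan is to split into two cases according to whether $\mL$ is abelian. If $\mL$ is abelian, the bracket vanishes, so every linear endomorphism of $\mL$ is automatically a $\frac{1}{2}$-derivation; since $\mathsf{dim}(\mL)>1$, any non-scalar linear map is non-trivial. So assume from now on that $\mL$ is non-abelian. Solvability forces the derived series to descend strictly until it reaches $0$, hence $[\mL,\mL]\subsetneq \mL$, which makes Lemma~\ref{ann} available as soon as $\mathsf{Ann}_{\mL}(\mL)\neq 0$.

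The key step is to apply Theorem~\ref{com-v} to the adjoint representation. The image $\mathbf{ad}(\mL)\cong \mL/\mathsf{Ann}_{\mL}(\mL)$ is a solvable subalgebra of $\mathfrak{gl}(\mL)$, so there exist a nonzero $x\in \mL$ and a linear form $\lambda$ on $\mL$ such that $[y,x]=\lambda(y)x$ for every $y\in \mL$. A short Jacobi computation,
\[
\lambda([y,z])x=[[y,z],x]=[y,[z,x]]-[z,[y,x]]=\lambda(y)\lambda(z)x-\lambda(z)\lambda(y)x=0,
\]
shows that $\lambda$ vanishes on $[\mL,\mL]$.

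The argument then branches on $\lambda$. If $\lambda=0$, then $0\neq x\in \mathsf{Ann}_{\mL}(\mL)$, and Lemma~\ref{ann} immediately produces a non-trivial $\frac{1}{2}$-derivation. If $\lambda\neq 0$, define $\varphi\colon \mL\to \mL$ by $\varphi(y)=\lambda(y)x$; both sides of the defining identity then collapse to zero,
\[
2\varphi([a,b])=2\lambda([a,b])x=0,\qquad [\varphi(a),b]+[a,\varphi(b)]=\lambda(a)[x,b]+\lambda(b)[a,x]=0,
\]
so $\varphi$ is a $\frac{1}{2}$-derivation. It is non-trivial because its image is contained in the line $\la x\ra$ while $\mathsf{dim}(\mL)>1$, and $\varphi\neq 0$ since $\lambda\neq 0$; hence $\varphi$ cannot equal a scalar multiple of ${\rm id}$.

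The only delicate case is the one in which $\mL$ has trivial annihilator, where the direct-sum and annihilator-based tricks of Lemmas~\ref{decom} and~\ref{ann} break down. The main idea there is that Lie's theorem (in the form of Theorem~\ref{com-v}) still supplies a one-dimensional $\mathbf{ad}$-invariant line, and its weight $\lambda$ packages itself canonically into the explicit $\frac{1}{2}$-derivation $y\mapsto \lambda(y)x$; once this observation is made, the verification is the short computation above.
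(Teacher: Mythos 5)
Your proof is correct, and in the key case it takes a genuinely cleaner route than the paper does. The paper first disposes of $\mathsf{Ann}_{\mL}(\mL)\neq 0$ via Lemma~\ref{ann}, and in the remaining case passes to the auxiliary ideal $W=\mathsf{Ann}_{[\mL,\mL]}([\mL,\mL])$ --- invoking nilpotency of $[\mL,\mL]$ to see that $W\neq 0$ and a Jacobi computation to see that $W$ is an ideal --- before applying Theorem~\ref{com-v} to the action of $\mL$ on $W$ and taking $\varphi=\mathbf{ad}_{w_0}$ for a common eigenvector $w_0\in W$. You instead apply Theorem~\ref{com-v} directly to the adjoint representation of $\mL$ on itself, observe via Jacobi that the resulting weight $\lambda$ kills $[\mL,\mL]$, and branch on $\lambda$: if $\lambda=0$ the eigenvector is a nonzero element of $\mathsf{Ann}_{\mL}(\mL)$ and Lemma~\ref{ann} applies (legitimately, since solvability gives $\mL\neq[\mL,\mL]$), while if $\lambda\neq 0$ the map $y\mapsto\lambda(y)x=-\mathbf{ad}_{x}(y)$ is a $\frac{1}{2}$-derivation because both sides of the defining identity vanish, and it is non-trivial since it is nonzero with one-dimensional image while $\mathsf{dim}(\mL)>1$. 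The two constructions are close cousins --- in both, the $\frac{1}{2}$-derivation is $\pm\mathbf{ad}$ of a common eigenvector annihilating the derived subalgebra --- but yours dispenses with the ideal $W$ and with the nilpotency of $[\mL,\mL]$ entirely, at the modest cost of a separate (trivial) abelian case and the extra (standard) observation that the weight of a common eigenvector vanishes on $[\mL,\mL]$. I see no gaps.
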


\begin{proof}
Since~$\mL$ is solvable and finite dimensional, it is well-known that~$\mL\neq [\mL,\mL]$ and~$[\mL,\mL]$ is nilpotent, (for instance, see~\cite[page 16]{hum72}). If~$\mathsf{Ann}_{\mL}(\mL)\neq 0$, then by Lemma~\ref{ann},  $\mL$ has non-trivial~$\frac{1}{2}$-derivations.

Now we assume that~$\mathsf{Ann}_{\mL}(\mL)=0$ and denote~$\mathsf{Ann}_{[\mL,\mL]}([\mL,\mL])$ by~$W$. Since~$[\mL,\mL]$ is nilpotent, it follows that~$W\neq 0$. We claim that~$W$ is an ideal of~$\mL$: For all~$w\in W$ and~$x,y,z\in \mL$,  we have~$[w,x]\in [\mL,\mL]$ and
\begin{center}
    $[[w,x],[y,z]]=[[w,[y,z]],x]+[w,[x,[y,z]]]=0$, namely, we have~$[w,x]\in W$.
    \end{center}
It follows that $\mL$ acts on~$W$ via the adjoint representation, namely, for all~$x\in \mL$ and~$w\in W$, we have~$x.w=\mathsf{ad }_x (w)=[x,w]$.

 Now we identify $\mathsf{ad }_x$ as an endomorphism of~$W$.  Then $\mathsf{ad}_ \mL=\{\mathsf{ad}_x \mid x\in \mL\}$ is a finite dimensional solvable subalgebra of~$\mathfrak{gl}(W)$. By Theorem~\ref{com-v}, there exists a nonzero element~$w_0\in W$ such that~$\mathsf{ad}_x (w_0)=\lambda_{x} w_0$ for every~$x\in \mL$, where each~$\lambda_x$ is an element in the underlying field depending on~$x$.  Moreover, we note that there exists an $x\in \mL$ such that~$[x,w_0]\neq 0$  since~$\mathsf{Ann}_{\mL}(\mL)= 0$.  It follows that
 $\varphi:\mL\rightarrow \mL, x\mapsto [w_0,x]$ \ (for all $x\in \mL$)  is a nonzero endomorphism of~$\mL$. Finally, since for all~$x,y\in \mL$, we have
\begin{center}$\varphi([x,y])=0=\frac{1}{2}[w_0,[x,y]]=\frac{1}{2} ([[w_0,x],y]+[x,[w_0,y]])=\frac{1}{2}     ([\varphi(x),y]+[x,\varphi(y)]).$\end{center}
So~$\varphi$ is a nonzero $\frac{1}{2}$-derivation of~$\mL$.
\end{proof}

	Let us recall the definition of ${\rm Hom}$-structures on Lie algebras.
	\begin{definition}
		Let $({\mathfrak L}, [\cdot,\cdot])$ be a Lie algebra and $\varphi$ be a linear map.
		Then $({\mathfrak L}, [\cdot,\cdot], \varphi)$ is a ${\rm Hom}$-Lie structure on $({\mathfrak L}, [\cdot,\cdot])$ if 
		\[
		[\varphi(x),[y,z]]+[\varphi(y),[z,y]]+[\varphi(z),[x,y]]=0.
		\]
	\end{definition}

Filippov proved that each nonzero $\delta$-derivation ($\delta\neq0,1$) of a Lie algebra, 
 gives a non-trivial ${\rm Hom}$-Lie algebra structure \cite[Theorem 1]{fil1}.
 Hence, by Theorem \ref{nontrslv}, we have the following corollary.

 \begin{corollary}
  Let~$\mL$ be a  solvable finite dimensional Lie algebra over an algebraic closed field of zero characteristic such that   $\mathsf{dim}(\mL)>1$.  
  Then~$\mL$   admits a non-trivial ${\rm Hom}$-Lie algebra structure.
 \end{corollary}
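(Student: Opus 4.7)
The plan is essentially to invoke two results already established in the paper and combine them. By Theorem~\ref{nontrslv}, any solvable finite-dimensional Lie algebra $\mL$ of dimension greater than one (over an algebraically closed field of characteristic zero) carries a nonzero $\frac{1}{2}$-derivation $\varphi$ which is not a scalar multiple of the identity; in the terminology of Filippov this is precisely a non-trivial $\delta$-derivation with $\delta=\frac{1}{2}$.

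Since $\frac{1}{2}\notin\{0,1\}$, we are in the regime covered by Filippov's theorem \cite[Theorem~1]{fil1}, which states that every nonzero $\delta$-derivation of a Lie algebra with $\delta\neq 0,1$ produces a non-trivial Hom-Lie algebra structure, with the endomorphism of the Hom-Lie structure being $\varphi$ itself. Concretely, one checks the Hom-Jacobi identity
\[
[\varphi(x),[y,z]]+[\varphi(y),[z,x]]+[\varphi(z),[x,y]]=0
\]
by rewriting each bracket $[\varphi(x),[y,z]]$ as $\tfrac{1}{\delta}\varphi[x,[y,z]]-[x,\varphi[y,z]]$ using the defining identity of a $\delta$-derivation, and then observing that the cyclic sum of the first terms vanishes by applying $\varphi$ to the Jacobi identity of $\mL$, while the cyclic sum of the second terms vanishes after one further expansion via the $\delta$-derivation property together with Jacobi; this is exactly the content of Filippov's argument.

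Therefore the proof consists simply of citing Theorem~\ref{nontrslv} to produce a non-trivial $\frac{1}{2}$-derivation $\varphi$ and citing \cite[Theorem~1]{fil1} to conclude that $(\mL,[\cdot,\cdot],\varphi)$ is a non-trivial Hom-Lie structure on $\mL$. There is no real obstacle: the hard work was done in Theorem~\ref{nontrslv}, and the passage from $\frac{1}{2}$-derivations to Hom-Lie structures is a direct application of a known criterion.
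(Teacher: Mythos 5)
Your proposal is correct and follows exactly the paper's own route: the corollary is obtained by combining Theorem~\ref{nontrslv} (existence of a non-trivial $\frac{1}{2}$-derivation) with Filippov's result \cite[Theorem 1]{fil1} that a nonzero $\delta$-derivation with $\delta\neq 0,1$ yields a non-trivial ${\rm Hom}$-Lie structure. The extra sketch of the Hom--Jacobi verification is fine but not needed, since the paper treats Filippov's theorem as a black box.
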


\begin{theorem}\label{nilpotent}
  Let $\mathfrak{L}$ be a finite dimensional solvable Lie algebra over an algebraic closed field of characteristic 0. 
Then $\mathfrak{L}$ admits a non-trivial transposed Poisson  structure.
\end{theorem}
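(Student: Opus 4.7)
The plan is to promote the non-trivial $\frac{1}{2}$-derivation produced by Theorem~\ref{nontrslv} into an explicit rank-one transposed Poisson product. From the proof of that theorem I extract a pair $(w_{0}, \lambda)$ consisting of a nonzero vector $w_{0} \in \mathfrak{L}$ and a nonzero linear functional $\lambda : \mathfrak{L} \to k$ that vanishes on $[\mathfrak{L}, \mathfrak{L}]$, chosen so that the bilinear identity $\lambda(x)[w_{0}, y] + \lambda(y)[x, w_{0}] = 0$ holds for all $x, y \in \mathfrak{L}$. In the case $\mathsf{Ann}_{\mathfrak{L}}(\mathfrak{L}) \neq 0$ (which in particular covers the abelian and one-dimensional situations) I pick $w_{0}$ nonzero and central, so that both brackets vanish, and I choose $\lambda$ to be any nonzero linear functional vanishing on $[\mathfrak{L}, \mathfrak{L}]$; such a $\lambda$ exists because solvability forces $\mathfrak{L} \neq [\mathfrak{L}, \mathfrak{L}]$. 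In the case $\mathsf{Ann}_{\mathfrak{L}}(\mathfrak{L}) = 0$ I use the common eigenvector $w_{0} \in W$ and the functional $\lambda$ defined by $[x, w_{0}] = \lambda(x)\, w_{0}$ already exhibited in the proof of Theorem~\ref{nontrslv}; the Jacobi identity forces $\lambda$ to vanish on $[\mathfrak{L}, \mathfrak{L}]$, the hypothesis $\mathsf{Ann}_{\mathfrak{L}}(\mathfrak{L}) = 0$ guarantees $\lambda \neq 0$, and the bilinear identity becomes the antisymmetric cancellation $-\lambda(x)\lambda(y)\, w_{0} + \lambda(y)\lambda(x)\, w_{0} = 0$.

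With $(w_{0}, \lambda)$ in hand, the candidate product is
\[
x \cdot y \;:=\; \lambda(x)\,\lambda(y)\, w_{0}.
\]
Commutativity is obvious, and associativity is immediate since both $(x \cdot y) \cdot z$ and $x \cdot (y \cdot z)$ collapse to $\lambda(x)\lambda(y)\lambda(z)\lambda(w_{0})\, w_{0}$. The compatibility identity~\eqref{link1} becomes
\[
2\lambda(z)\,\lambda([x,y])\, w_{0} \;=\; \lambda(z)\bigl(\lambda(x)[w_{0}, y] + \lambda(y)[x, w_{0}]\bigr),
\]
whose left-hand side vanishes because $\lambda$ kills $[\mathfrak{L}, \mathfrak{L}]$ and whose right-hand side vanishes by the bilinear identity. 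Non-triviality reduces to the observation that $x \cdot x = \lambda(x)^{2}\, w_{0}$ is nonzero for any $x$ with $\lambda(x) \neq 0$, and such $x$ exists because $\lambda \neq 0$ and $w_{0} \neq 0$.

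The only step that demands genuine care is the second case, where the pair $(w_{0}, \lambda)$ is not freely chosen but imported from the eigenvalue analysis behind Theorem~\ref{nontrslv}; the key verification there is that $\lambda$ automatically vanishes on $[\mathfrak{L}, \mathfrak{L}]$, which is a one-line Jacobi computation, and that $\lambda \neq 0$, which is exactly the failure of $w_{0}$ to lie in $\mathsf{Ann}_{\mathfrak{L}}(\mathfrak{L})$. After that, the three transposed Poisson axioms collapse to routine inspections, and the hypothesis on the base field is used only through Lie's theorem inside the proof of Theorem~\ref{nontrslv}.
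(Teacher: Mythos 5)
Your proof is correct. In the essential case $\mathsf{Ann}_{\mathfrak{L}}(\mathfrak{L})=0$ your product actually coincides with the paper's: the paper sets $x\cdot y=[[w_0,x],y]$, and since $[x,w_0]=\lambda(x)w_0$ this is exactly $\lambda(x)\lambda(y)w_0$. Where you genuinely diverge is in the packaging of the remaining situations: the paper splits them into several ad hoc sub-cases (abelian; $\mathsf{Ann}_{\mathfrak{L}}(\mathfrak{L})\neq 0$ with trivial intersection with $[\mathfrak{L},\mathfrak{L}]$; nontrivial intersection), each with its own basis-dependent product, whereas you run the single rank-one formula $x\cdot y=\lambda(x)\lambda(y)w_0$ through every case, varying only the source of the pair $(w_0,\lambda)$. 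This buys real robustness: the paper's abelian recipe $x\cdot y=x+y$ is not bilinear, and its recipe $x_i\cdot x_j=x_i+x_j$ on a basis of the annihilator fails associativity as soon as that space has dimension at least $2$ (e.g.\ $(x_1\cdot x_1)\cdot x_2=2x_1+2x_2$ while $x_1\cdot(x_1\cdot x_2)=3x_1+x_2$), whereas your product is manifestly bilinear, commutative, associative (both triple products collapse to $\lambda(x)\lambda(y)\lambda(z)\lambda(w_0)w_0$), and compatible with the bracket because $\lambda$ kills $[\mathfrak{L},\mathfrak{L}]$. The one input you must certify in each case --- the cancellation $\lambda(x)[w_0,y]+\lambda(y)[x,w_0]=0$ and the nonvanishing of $\lambda$ --- you verify correctly: trivially when $w_0$ is chosen central, and via the eigenvector relation together with $\mathsf{Ann}_{\mathfrak{L}}(\mathfrak{L})=0$ and the Jacobi identity when it is not.
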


\begin{proof}
 If~$\mL$ is abelian, then for  all~$x,y\in \mL$, we define~$x\cdot y=x+y$. Clearly, $(\mL, \cdot, [\cdot, \cdot])$ is a nontrivial transposed Poisson  structure. (Note that when~$\mathsf{dim}(\mL)=1$, then $\mL$ has no nontrivial $\frac{1}{2}$-derivation and has  nontrivial transposed Poisson  structure.) From now on, we assume that $\mL$ is not abelian. 

 If~$\mathsf{Ann}_{\mL}(\mL)\neq  0$ and~$\mL=V\oplus [\mL,\mL]\oplus \mathsf{Ann}_{\mL}(\mL)$  as vector spaces. Let~$Y=\{y_1,\wdots y_m \}$ be a linear basis of~$V\oplus [\mL,\mL]$ and let~$X=\{x_1,\dots, x_n\}$ be a linear basis of~$\mathsf{Ann}_{\mL}(\mL)$. Define~$\cdot $ on $\mL$ by~$x_i\cdot x_j=x_i+x_j$, and define $z_1\cdot z_2=0$ if~$\{z_1,z_2\}\subseteq X\cup Y$ and~$\{z_1,z_2\}\nsubseteqq X$.  Clearly, $(\mL, \cdot, [\cdot, \cdot])$ is a nontrivial transposed Poisson  structure.

If~$[\mL,\mL]\cap \mathsf{Ann}_{\mL}(\mL)\neq 0$, then there exists a nonzero element~$y_1\in [\mL,\mL]\cap \mathsf{Ann}_{\mL}(\mL)$. So we may assume that~$\mL=W\oplus [\mL,\mL]$ as vector spaces. Suppose thta $Y=\{y_1,\wdots y_m \}$ is a linear basis of~$ [\mL,\mL]$ and~$X=\{x_1,\dots, x_n\}$ is a linear basis of~$W$. Define~$x_i\cdot x_j=y_1$, and define $z_1\cdot z_2=0$ if~$\{z_1,z_2\}\subseteq X\cup Y$ and~$\{z_1,z_2\}\nsubseteqq X$. Clearly, $(\mL, \cdot, [\cdot, \cdot])$ is a nontrivial transposed Poisson  structure.

  If~$\mathsf{Ann}_{\mL}(\mL)= 0$, then  
with the notations as in the proof of Theorem~\ref{com-v}. For all~$x,y\in \mL$, we define~$x\cdot y=[[w_0,x],y]$. Let~$a$ be an element in $\mL$ such that~$[w_0,a]=w_0$. Then we have~$a\cdot a=w_0\neq 0$. Moreover, for all~$x,y,z\in \mL$, since~$w_0\in \mathsf{Ann}_{[\mL,\mL]}([\mL,\mL])$, we have
$$x\cdot y=[[w_0,x],y]=[[w_0,y],x]+[w_0,[x,y]]=[[w_0,y],x]=y\cdot x$$
and
$$(x\cdot y)\cdot z=[[w_0,x],y] \cdot z=[[w_0, [[w_0,x],y]],z]=0=(y\cdot z)\cdot x =x\cdot (y\cdot z).$$
So~$(\mL, \cdot)$ is an associative commutative algebra (of nilpotent index 3).
Moreover, since $\mathsf{Ann}_{[\mL,\mL]}([\mL,\mL]) $ is an ideal of~$\mL$,  we have
$$x\cdot [y,z]=[y,z]\cdot x=[[w_0, [y,z]], x]=0$$
and
$$[x\cdot y, z]+[y,x\cdot z]=[[[w_0,x],y],z]+[y, [[w_0,x],z]]
=[[[w_0,x],y],z]- [[w_0,x],z],y]=0.$$
So $(\mL,\cdot, [\cdot, \cdot])$ is a non-trivial transposed Poisson  structure. The proof is completed.
\end{proof}

\subsection{$\frac{1}{2}$-derivations and central extensions}
We also note that if~$\mL/\mathsf{Ann}_{\mL}(\mL)$ has only trivial $\frac{1}{2}$-derivations, then every $\frac{1}{2}$-derivation of~$\mL$ is in the centroid of~$\mL$.
\begin{lemma}
 If~$\mL/\mathsf{Ann}_{\mL}(\mL)$ has only trivial $\frac{1}{2}$-derivations, then for all $\frac{1}{2}$-derivations~$\varphi$ of~$\mL$, for all~$x,y\in \mL$, we have
 \begin{center}$\varphi([x,y])=\alpha[x,y]=[\varphi(x),y]=[x,\varphi(y)]$
 \end{center} for some element~$\alpha$ from the underlying field.
\end{lemma}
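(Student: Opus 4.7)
The plan is to view the hypothesis as saying that $\varphi$ descends to a scalar operator on the quotient, and then unwind what this tells us on $\mathfrak{L}$ itself.

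First, I would check that $\mathsf{Ann}_{\mathfrak{L}}(\mathfrak{L})$ is invariant under every $\frac{1}{2}$-derivation $\varphi$. Indeed, for $z\in \mathsf{Ann}_{\mathfrak{L}}(\mathfrak{L})$ and arbitrary $y\in \mathfrak{L}$, the identity
\[
0 = 2\varphi([z,y]) = [\varphi(z),y] + [z,\varphi(y)]
\]
reduces (using $[z,\varphi(y)]=0$) to $[\varphi(z),y]=0$, so $\varphi(z)\in \mathsf{Ann}_{\mathfrak{L}}(\mathfrak{L})$. Consequently, $\varphi$ induces a well-defined linear map $\overline{\varphi}$ on the quotient $\mathfrak{L}/\mathsf{Ann}_{\mathfrak{L}}(\mathfrak{L})$, and the $\frac{1}{2}$-derivation identity on representatives immediately transfers to the quotient, so $\overline{\varphi}$ is itself a $\frac{1}{2}$-derivation.

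By the hypothesis, $\overline{\varphi}$ is trivial, so there is a scalar $\alpha$ from the ground field with $\overline{\varphi} = \alpha\,{\rm id}$. Lifting back, this means that for every $x\in \mathfrak{L}$ we can write
\[
\varphi(x) = \alpha x + z_x, \qquad z_x \in \mathsf{Ann}_{\mathfrak{L}}(\mathfrak{L}).
\]
Since elements of $\mathsf{Ann}_{\mathfrak{L}}(\mathfrak{L})$ bracket trivially with everything, $[\varphi(x),y] = \alpha[x,y]$ and $[x,\varphi(y)] = \alpha[x,y]$ for all $x,y\in \mathfrak{L}$. Plugging these into the defining relation
\[
2\varphi([x,y]) = [\varphi(x),y] + [x,\varphi(y)] = 2\alpha[x,y]
\]
yields $\varphi([x,y]) = \alpha[x,y]$, which is the required chain of equalities.

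No genuine obstacle arises here; the only step worth stating carefully is the invariance of $\mathsf{Ann}_{\mathfrak{L}}(\mathfrak{L})$ under $\varphi$, which validates passage to the quotient. Everything else is a direct computation using that the ``error term'' $z_x$ lies in the annihilator of $\mathfrak{L}$.
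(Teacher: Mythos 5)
Your proof is correct and follows essentially the same route as the paper's: show $\varphi$ preserves $\mathsf{Ann}_{\mL}(\mL)$, descend to a $\frac{1}{2}$-derivation of the quotient, invoke the hypothesis to get a scalar $\alpha$, and then use that $\varphi(x)-\alpha x$ annihilates $\mL$ to derive the chain of equalities. No issues.
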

\begin{proof}
  For all~$x\in \mathsf{Ann}_{\mL}(\mL) $ and~$y\in \mL$, we have
  $$[\varphi(x),y]=2\varphi([x,y])-[x,\varphi(y)]=0.$$
So we obtain that~$\varphi(\mathsf{Ann}_{\mL}(\mL))\subseteq \mathsf{Ann}_{\mL}(\mL)$.  Therefore, $\varphi$ induces an endomorphism~$\overline{\varphi}$ of~$\mL/\mathsf{Ann}_{\mL}(\mL)$ by~$\overline{\varphi}(x+\mathsf{Ann}_{\mL}(\mL))=\varphi(x)+\mathsf{Ann}_{\mL}(\mL)$ for every~$x\in \mL$. Moreover, since~$\mathsf{Ann}_{\mL}(\mL)$ is an ideal of~$\mL$, for all~$x,y\in \mL$, we have
\begin{longtable}{lcl}
 $\overline{\varphi}\Big([x+\mathsf{Ann}_{\mL}(\mL),y+\mathsf{Ann}_{\mL}(\mL)]\Big)$
&$=$&$\overline{\varphi}([x,y]+\mathsf{Ann}_{\mL}(\mL))=\varphi([x,y])+\mathsf{Ann}_{\mL}(\mL)$\\
&$=$&$ \frac{1}{2} \Big([\varphi(x),y]+[x,\varphi(y)]\Big)+\mathsf{Ann}_{\mL}(\mL).$
\end{longtable}
It follows that~$\overline{\varphi}$ is a $\frac{1}{2}$-derivation of~$\mL/\mathsf{Ann}_{\mL}(\mL)$. By assumption, there exists an element~$\alpha$ of the underlying field such that~$\varphi(x)+\mathsf{Ann}_{\mL}(\mL)=\overline{\varphi}(x+\mathsf{Ann}_{\mL}(\mL))=\alpha x+\mathsf{Ann}_{\mL}(\mL)$ for every~$x\in \mL$.
So we have
\begin{center}$\varphi(x)-\alpha(x) \in \mathsf{Ann}_{\mL}(\mL)$.
\end{center} Therefore, for all~$x,y\in \mL$, we have
\begin{longtable}{lcl}
$ \varphi([x,y])$&$=$&$\frac{1}{2} \Big([\varphi(x),y]+[x,\varphi(y)]\Big)
=\frac{1}{2} \Big([\alpha x,y]+[x,\alpha y]\Big)$\\
&$=$&$\alpha([x,y])=[\varphi(x),y]=[x,\varphi(y)].$
\end{longtable}
 The proof is completed.
\end{proof}

{\bf Compliance with ethical standard}

\medskip 

{\bf Author contributions} 
All authors contributed to the study, conception and
design. All authors read and approved the final manuscript.

{\bf Conflict of interest} 
There is no potential conflict of ethical approval, conflict
of interest, and ethical standards.

{\bf Data Availibility} 
Data sharing is not applicable to this article as no datasets
were generated or analyzed during the current study.

\end{document}